\newtheorem{theo}{Theorem}[section]
\newtheorem{lemma}[theo]{Lemma}
\newtheorem*{introtheo}{Theorem}
\theoremstyle{definition}
\newtheorem{defi}[theo]{Definition}
\newtheorem*{introdefi}{Definition}
\theoremstyle{remark}
\newtheorem{remark}[theo]{Remark}
\def \avg {\operatorname{avg}}
\def \instability {\operatorname{instability}}
\def \Aut {\operatorname{Aut}}
\def \Sym {\operatorname{Sym}}
\begin{document}

\title{Bounds for Matchings in Nonabelian Groups}
\author{Will Sawin}
\address{ETH Institute for Theoretical Studies
ETH Zurich
8092 Zurich
william.sawin@math.ethz.ch}
\thanks{The author was supported by Dr. Max R\"{o}ssler, the Walter Haefner Foundation and the ETH Zurich Foundation. The author was also in residence at the Mathematical Sciences Research Institute in Berkeley, California during the Spring 2017 semester and supported by the National Science Foundation under Grant No. DMS-1440140.}
\maketitle

\begin{abstract} We give upper bounds for triples of subsets of a finite group such that the triples of elements that multiply to $1$ form a perfect matching. Our bounds are the first to give exponential savings in powers of an arbitrary finite group. Previously, \cite{BCCGNSU} gave similar bounds in abelian groups of bounded exponent, and \cite{Petrov} gave exponential bounds in certain $p$-groups.  \end{abstract}

The key concept of this paper is as follows:

\begin{introdefi} A {\em multiplicative matching} in a group $G$ is a triple of subsets $S,T,U$ in $G$ such that the subset $M \subseteq S\times T\times U$ consisting of triples $(s,t,u)$ satisfying $stu=1$ is a perfect matching of $S \times T \times U$ - i.e. each element of $S$ is in exactly one triple in $M$, and the same for $T$ and $U$. The {\em cardinality} of a multiplicative matching is $|M|=|S|=|T|=|U|$.\end{introdefi}

(The name was inspired by the use of {\em additive matching} as an alternate name for the {\em tri-colored sum-free sets} of \cite{BCCGNSU} in \cite{Aaronson} and replaces the term {\em tri-colored product-free sets} of \cite{Petrov}.) 

This paper is concerned with the problem of finding strong upper bounds on multiplicative matchings in finite non-abelian groups $G$. As in \cite{BCCGNSU}, a proof of sufficiently strong bounds along these lines for a given family of groups would rule out the possibility of proving the matrix multiplication constant is $2$ by finding subsets of those groups satisfying the simultaneous triple product property.
 
 The methods of this paper are based on the formalism of slice rank, and hence they build on the breakthrough work of Croot, Lev, and Pach \cite{CLP}, its generalization due to Ellenberg and Gijswijt \cite{EG}, and its interpretation in terms of slice rank, due to Tao \cite{Tao}. The idea of applying the method to group algebras in particular is due to Petrov \cite{Petrov}.
 
For most interesting families of group, the bounds in this paper are not strong enough to rule out proving the matrix multiplication constant is $2$ using subsets of those groups with the simultaneous triple product property. However, the methods used could allow one to give better bounds, possibly including bounds strong enough to have applications to matrix multiplication, with a better understanding of the modular group algebra $\mathbb F_p[G]$. 

The organization of this paper is as follows:

In Section 1, we present first a quick argument that uses as a black box certain results from \cite{BCCGNSU} to give the following bound:

\begin{introtheo}[\ref{set}]Let $G$ be a nontrivial finite group. There exists a constant $\delta<1$ such that the any multiplicative matching in $G^n$ has size at most $\delta^n |G|^n$. \end{introtheo}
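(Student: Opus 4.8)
The plan is to obtain the bound from the abelian case --- which is exactly the theorem of \cite{BCCGNSU} on tri-colored sum-free sets (``additive matchings'') --- by induction on $|G|$. When $G$ is abelian the base case is immediate: $G^n$ is abelian of exponent equal to the (finite) exponent of $G$, a multiplicative matching in $G^n$ is literally an additive matching, and the exponential bounds of \cite{BCCGNSU} for bounded-exponent abelian groups supply a $\delta_G<1$ such that every multiplicative matching in $G^n$ has size at most $\delta_G^{\,n}|G|^n$. So the whole content is the reduction of a general nontrivial finite $G$ to this situation.

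For the inductive step I would choose a proper nontrivial normal subgroup $N\trianglelefteq G$, set $A=G/N$, and let $\pi\colon G^n\to A^n$ be the induced projection. Given a multiplicative matching $S,T,U,M$ in $G^n$ of size $m$, the relation $stu=1$ forces $\pi(s)\pi(t)\pi(u)=1$, so $M$ decomposes as a disjoint union $M=\bigsqcup_\tau M_\tau$ over triples $\tau\in A^n\times A^n\times A^n$ of product $1$; after restricting the three colour classes to the appropriate fibres of $\pi$, each $M_\tau$ is itself a multiplicative matching in $N^n$, so the inductive hypothesis gives $|M_\tau|\le\delta_N^{\,n}|N|^n$. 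It then suffices to control the weighted block count $\sum_\tau|M_\tau|$ by an expression of the form $\delta_A'^{\,n}|A|^n$: combining this with the per-block estimate and using $|G|=|N|\,|A|$ yields a matching bound of $(\delta_A'\delta_N)^n|G|^n$, and one takes $\delta$ slightly larger than $\delta_A'\delta_N<1$.

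The step I expect to be the real obstacle --- the point at which the argument must be clever rather than purely formal --- is precisely bounding the (weighted) number of blocks. The projection $\pi$ is in general far from injective on each colour class, so $\pi(S),\pi(T),\pi(U)$ do \emph{not} themselves form a matching in $A^n$, and one cannot directly quote \cite{BCCGNSU} for them; moreover any naive attempt to ``separate'' the fibres of $\pi$ into additional coordinates costs a factor exponential in $n$ and large enough to swamp the savings. One has to extract genuine matching-type structure in a bounded-exponent abelian quotient (of $A$, or of $N$, whichever is nontrivial) from the block data in a way that loses only sub-exponentially. A secondary difficulty is that the induction as stated has no starting point when $G$ is a nonabelian finite simple group: there is then no proper nontrivial normal subgroup, and every subgroup or quotient bounds the matching number only from below, so such groups must be handled separately --- in effect by controlling the slice rank of a high tensor power of the group-algebra structure tensor of $G$ directly, which is the part of the argument where a black-box appeal to \cite{BCCGNSU} does not by itself suffice.
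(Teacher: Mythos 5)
Your proposal is not a proof: both of the difficulties you flag at the end are genuine, and together they constitute the entire content of the theorem rather than loose ends. The fatal one is the base case. A nonabelian finite simple group $G$ has no proper nontrivial normal subgroup, so your induction never starts for exactly the groups the paper cares about (the abelian case was already settled by \cite{BCCGNSU}, and the paper's later sections show that the simple case is where all the work lives). You defer this to ``controlling the slice rank of a high tensor power of the group-algebra structure tensor directly,'' but that deferral is the theorem. The block-counting step in your inductive step is also unresolved: as you note, $\pi(S),\pi(T),\pi(U)$ need not form a matching in $(G/N)^n$, the fibres of $\pi$ over a single triple can carry essentially all of $M$, and no mechanism is proposed for extracting an exponential saving in the quotient. (There is also a smaller wrinkle you gloss over: a block $M_\tau$ lives on a triple of cosets of $N^n$, and turning it into a multiplicative matching \emph{in} $N^n$ requires translating and conjugating the three factors, so the relation becomes a twisted one of the form $\alpha(n_1)\beta(n_2)n_3=d$ rather than $n_1n_2n_3=1$; this is repairable but not free.)

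The paper's route avoids all of this and is essentially one page. It takes $R=\mathbb F_p[G]$ for a prime $p$ dividing $|G|$, notes that $R$ is not semisimple because the line spanned by $\sum_{g\in G}g$ is a two-sided ideal squaring to zero, deduces from this a positive lower bound on the \emph{instability} of the multiplication tensor of $R$ (Lemma \ref{instability}), and then invokes \cite[Theorem 4.10]{BCCGNSU}: positive instability forces the slice rank of the $n$-th tensor power to decay like $3\delta^n(\dim R)^n$. Since a multiplicative matching of cardinality $m$ in $G^n$ forces the slice rank of the tensor $1_{xyz=1}$ on $(G^n)^3$ to be at least $m$ (the restriction to $S\times T\times U$ is diagonal), the bound follows, with the factor $3$ removed by amplification. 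The point you have backwards is which part of \cite{BCCGNSU} is the usable black box: it is not the tri-colored sum-free bound for bounded-exponent abelian groups (which indeed does not transfer), but the purely tensor-theoretic instability-to-slice-rank theorem, which applies verbatim to the multiplication tensor of any non-semisimple finite-dimensional algebra, abelian or not. No induction on normal subgroups is needed, and the simple groups are handled on exactly the same footing as all others.
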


We do not state a precise value of $\delta$, because, though it could be made explicit, the bound we obtain by this method would be far from optimal. In Section 2, we present a more in-depth argument that gives the stronger bound:

\begin{introtheo}[\ref{main2}]Let $H$ be a nonabelian finite simple group. Let $G$ be a group containing $H^n$ as a normal subgroup. Then the cardinality of a multiplicative matching in $G$ is at most 

\[\left(1 - \left( \frac{2- \frac{3}{2^{2/3}} }{|H|}\right) \right)^n |G|. \] \end{introtheo}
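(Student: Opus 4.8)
The plan is to use the slice-rank method. The key elementary fact is: for any field $k$ and any function $F\colon G\times G\times G\to k$ with $F(x,y,z)=0$ whenever $xyz\neq1$ and $F(x,y,z)\neq0$ whenever $xyz=1$, every multiplicative matching $(S,T,U)$ in $G$ satisfies $|M|\le\operatorname{slicerank}_k F$; indeed, the restriction of $F$ to $S\times T\times U$ is supported exactly on $M$ (by the definition of $M$) and nonzero there, so — $M$ being a perfect matching — after relabelling $S,T,U$ along $M$ it is a nonzero diagonal tensor, of slice rank $|M|$ by Tao's lemma \cite{Tao}, and slice rank does not increase under restriction to a sub-box. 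The admissible $F$ are precisely the functions $\tilde w(x,y)\,\delta_{xyz=1}$ with $\tilde w$ nowhere zero, since $F$ is determined by its values on the graph $\{(x,y,(xy)^{-1})\}$. So everything reduces to choosing $k$ and $\tilde w$ — equivalently, a suitable trilinear form on the group algebra $k[G]$ that is nondegenerate on the product-one locus — making the slice rank small, and here the modular group algebra enters, following \cite{Petrov}.

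First I would peel off the quotient $Q=G/H^n$: a product-one triple in $G$ projects to one in $Q$, so one may incorporate a full delta-function in the $Q$-direction (slice rank $|Q|$), which multiplies the final estimate by $|Q|=|G|/|H|^n$; it then suffices to treat $G=H^n$ and to prove slice rank at most $\bigl(|H|-(2-3\cdot2^{-2/3})\bigr)^n$. Work over $k$ of characteristic $2$, which is available because the Sylow $2$-subgroup of a nonabelian finite simple group is non-cyclic (else Burnside's normal $p$-complement theorem contradicts simplicity), so $4\mid|H|$ and $H$ contains an involution $t$. The plan is to split $k[H]$, relative to the subalgebra $k[\langle t\rangle]\cong k[v]/(v^2)$, into a two-dimensional "balanced nilpotent" part and a $(|H|-2)$-dimensional "inert" part, so that the $n$-fold tensor power of $\delta^H_{xyz=1}$ carries a structure on which one can run the Ellenberg–Gijswijt degree argument. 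On the nilpotent part the associated graded has Poincaré polynomial $1+x$ (nilpotency index two), so $xyz=1$ forces the three degrees in a homogeneous piece to sum to at most $n$; assigning each piece to the slice direction of least degree and counting — generating function $1+x$, threshold $n/3$ — bounds that contribution by $\bigl(\min_{0<x\le1}(1+x)/x^{1/3}\bigr)^n=(3\cdot2^{-2/3})^n$. The inert part contributes only its dimension $|H|-2$ per coordinate (its flattening rank is at most $|H|-2$, flattening ranks multiply under tensor powers, and a one-directional tensor can be tensored against the slices of the other part without loss). Summing over the set of coordinates in the nilpotent regime yields $\sum_{k}\binom nk(|H|-2)^{k}(3\cdot2^{-2/3})^{n-k}=\bigl(|H|-2+3\cdot2^{-2/3}\bigr)^n$, up to routine refinements the claimed bound.

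The optimization itself is immediate: $\min_{x>0}(1+x)/x^{1/3}=3\cdot2^{-2/3}$ is attained at $x=\tfrac12$, and $2-3\cdot2^{-2/3}$ is precisely the per-factor saving. The genuine difficulty — the main obstacle — is making the "split $k[H]$ against $k[\langle t\rangle]$" step rigorous: $\langle t\rangle$ is not normal in $H$ (it cannot be, $H$ being nonabelian simple), so $k[\langle t\rangle]$ is neither a block nor a two-sided ideal, and the naive estimate that $abc$ drops in the $(t-1)$-adic filtration when the degrees of $a,b,c$ add up fails because of non-commutativity. Reconciling the trilinear form $\langle abc,1\rangle$ with the $k[\langle t\rangle]$-bimodule structure of $k[H]$ — effectively, a double-coset or transfer analysis showing that, once the inert dimensions are stripped away, the only non-semisimplicity the form detects is a genuine $\mathbb{F}_2[v]/(v^2)$-worth to which the degree count applies — is the technical heart of the argument, and the reason the extracted constant is the rather small $2-3\cdot2^{-2/3}$ rather than something reflecting the full modular representation theory of $H$.
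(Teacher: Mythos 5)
Your setup (the diagonal-restriction lower bound for slice rank, the search for a nilpotent structure in a modular group algebra, and even the numerical identity $\min_{0<x\le 1}(1+x)/x^{1/3}=3\cdot 2^{-2/3}$ at $x=1/2$, which is exactly where the paper's constant comes from) is on target, but both load-bearing steps of your argument have genuine gaps. First, the reduction from $G$ to $H^n$: you cannot ``incorporate a full delta-function in the $Q$-direction'' at a cost of a factor $|Q|$. The tensor $1_{xyz=1}$ on $G^3$ does not factor as a tensor product with $1_{\bar x\bar y\bar z=1}$ on $Q^3$ (and slice rank is not submultiplicative under tensor product anyway: a slice of one factor tensored with a slice of the other is not a slice of the product). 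Decomposing by cosets instead gives $|Q|^2$ blocks indexed by product-one triples in $Q$, and subadditivity then loses a factor $|Q|^2$, not $|Q|$; the fibers of the matching over a fixed triple in $Q$ are only partial matchings, so you cannot recover the lost factor that way. The paper's route is different: it builds a conjugation-invariant two-sided ideal $J\subseteq k[H^n]$ with $J^3=0$ (invariance uses $\Aut(H^n)=\Aut(H)\wr S_n$, which is where the hypothesis that $H$ is indecomposable with trivial center enters), and then Lemma \ref{jumping} (Umans's argument) shows the two-sided ideal of $k[G]$ generated by $J$ still cubes to zero and has dimension at least $\frac{|G|}{|H|^n}\dim J$; Lemma \ref{slicing} converts codimension of such an ideal into a slice-rank bound, which is what produces the single factor $|G|/|H|^n$.

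Second, the core construction. You place the nilpotency in $k[\langle t\rangle]\cong k[v]/(v^2)$ for an involution $t$, and you yourself flag that making this compatible with the trilinear form is unresolved; it is, and I do not see how to repair it, precisely because $\langle t\rangle$ is not normal, so the $(t-1)$-adic degree is not even approximately multiplicative under the group product and no two-sided ideal is produced. The object that actually works is much simpler: for any prime $p\mid|H|$, the one-dimensional span $I$ of $\sum_{h\in H}h$ in $\mathbb F_p[H]$ is a two-sided, automorphism-invariant ideal with $I^2=0$, and its annihilator $I^\perp$ (two-sided, of codimension $1$ by the perfect pairing $\langle a,b\rangle=$ coefficient of $1$ in $ab$) satisfies $I\,I^\perp=I^\perp I=0$. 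The three-step filtration $k[H]\supset I^\perp\supset I$, with $I^\perp$ sitting at index $1/3$ and $I$ above it, satisfies $I_\alpha I_\beta I_\gamma=0$ whenever $\alpha+\beta+\gamma>1$, and feeding it into the general bound gives per-factor cost $\inf_\lambda\bigl(\lambda^{-1/3}+(|H|-2)+\lambda^{2/3}\bigr)=|H|-2+3\cdot2^{-2/3}$ --- your target number, but obtained from a genuine two-sided ideal rather than from a non-normal order-$2$ subgroup. Note also that no characteristic-$2$ or Sylow analysis is needed; any prime dividing $|H|$ works.
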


This essentially generalizes the previous bound because, for any finite group $G$, there must be a normal subgroup isomorphic to $H^k$ for some finite simple group $H$, and then $G^n$ contains $H^{nk}$ as a normal subgroup, so as long as $H$ is nonabelian we may take $\delta =\left(1 - \left( \frac{2- \frac{3}{2^{2/3}} }{|H|}\right) \right)^k$. The case when $H$ is abelian was previously handled by Chris Umans (in personal communication) with an argument that gives a similar bound.

The method of this section is adapted in part from \cite{Petrov}, and the strategy used to bound multiplicative matchings in a group using only properties of a normal subgroup is based on that of Umans.

In this section we also state a bound (Theorem \ref{main}) in terms of certain filtrations of the modular group algebra $\mathbb F_p[H]$. In the special case of a filtration of $\mathbb F_p[H]$ by powers of a two-sided ideal $I$ of $\mathbb F_p[H]$ satisfying $I^k=0$, the bound of Theorem \ref{main} is an explicit function of the dimensions of $I, I^2, \dots, $ and $I^{k-1}$ that grows stronger as these dimensions grow larger but grows weaker as $k$ grows larger.

Hence a study of the group algebras of finite simple groups in various characteristics leading to the construction of filtrations by large ideals whose small powers vanish could give a significant improvement of the bound in Theorem \ref{main2}. 

Section 3 begins the program of finding improved filtrations by studying the finite simple group $PSL_2(\mathbb F_p)$. We obtain the following result:

\begin{introtheo}[\ref{linear}] Let $p>3$ be a prime and let $G$ be a group containing $PSL_2(\mathbb F_p)^n$ as a normal subgroup. The size of a multiplicative matching in $G$ is at most \[ \delta_p^n |G|\]

where \[\delta_p= \inf_{\lambda \in (0,1)} \left( \frac{1 + \lambda^3 +\lambda^6}{3\lambda^2} + \frac{2 \lambda^2 - 1 - \lambda^6}{(p+1)\lambda^2} +\frac{2-2\lambda}{p^2-1} \right)\]

satisfies $\lim_{p \to \infty} \delta_p \approx .919$.
 \end{introtheo}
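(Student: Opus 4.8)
The plan is to derive Theorem~\ref{linear} from Theorem~\ref{main} by producing an explicit filtration of the modular group algebra $A=\mathbb{F}_p[PSL_2(\mathbb{F}_p)]$ and evaluating the bound that Theorem~\ref{main} attaches to it. The decisive structural input is that $A$ carries a \emph{three-step} filtration by two-sided ideals: writing $J$ for the Jacobson radical of $A$, one has $J^{3}=0$ while $J^{2}\neq 0$, so that $A\supseteq J\supseteq J^{2}\supseteq 0$ is a filtration by powers of the ideal $J$ with $J^{3}=0$. This is exactly the case in which Theorem~\ref{main} specializes to an explicit function of $\dim J$ and $\dim J^{2}$; since that function carries an internal optimization parameter, one obtains the $\inf_{\lambda}$ of the statement, with the three layers $A/J$, $J/J^{2}$, $J^{2}$ each entering a triple product and thereby producing the weights $\lambda^{0},\lambda^{3},\lambda^{6}$ and the factor $3$ visible in the first summand of $\delta_p$.

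To verify this structural input and compute the relevant dimensions I would invoke the modular representation theory of $SL_2(\mathbb{F}_p)$ in the defining characteristic: the simple modules are the symmetric powers $L_0,\dots,L_{p-1}$ of the natural module, with $\dim L_r=r+1$, and those with $r$ even are precisely the ones inflated from $PSL_2(\mathbb{F}_p)$. Since the Sylow $p$-subgroup is cyclic of order $p$, the algebra splits as $A\cong B_0\times M_p(\mathbb{F}_p)$, where the second factor is the semisimple block afforded by the projective Steinberg module $L_{p-1}$, and $B_0$, the principal block, is a Brauer tree algebra whose tree is a line with exceptional multiplicity $2$. From the line shape and the small multiplicity one checks that every projective indecomposable of $B_0$ has Loewy length exactly $3$, whence $J(B_0)^{3}=0$ and $J(A)^{3}=0$. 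Counting composition factors through the Brauer tree then gives $\dim B_0/J=\dim J^{2}=\sum_{j=0}^{(p-3)/2}(2j+1)^{2}=\frac{p(p-1)(p-2)}{6}$ and $\dim J/J^{2}=\dim B_0-2\cdot\frac{p(p-1)(p-2)}{6}=\frac{p(p^{2}-7)}{6}$, using $\dim B_0=|PSL_2(\mathbb{F}_p)|-p^{2}$. Feeding these into Theorem~\ref{main} — with the semisimple matrix block $M_p(\mathbb{F}_p)$ handled by the trivial one-step filtration, so that it is charged only its dimension $p^{2}$ (accounting for the $\lambda$-independent part $\frac{2}{p+1}+\frac{2}{p^{2}-1}=\frac{2p}{p^{2}-1}$ of $\delta_p$), and the principal block contributing $\min_{\lambda}\lambda^{-2}\big(\dim B_0/J+\lambda^{3}\dim J/J^{2}+\lambda^{6}\dim J^{2}\big)$ — and dividing by $|PSL_2(\mathbb{F}_p)|=\frac{p(p^{2}-1)}{2}$ yields the stated expression for $\delta_p$ after simplification.

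Finally, the limit is immediate: as $p\to\infty$ the three layer dimensions $\dim B_0/J$, $\dim J/J^{2}$, $\dim J^{2}$ are each asymptotic to $p^{3}/6$ and $\dim B_0\sim|PSL_2(\mathbb{F}_p)|\sim p^{3}/2$, while $p^{2}=o(p^{3})$, so the second and third summands of $\delta_p$ tend to $0$ and $\delta_p\to\inf_{\lambda\in(0,1)}\frac{1+\lambda^{3}+\lambda^{6}}{3\lambda^{2}}$, whose minimizer satisfies $4\lambda^{6}+\lambda^{3}-2=0$ and which evaluates numerically to $\approx.919$. The main obstacle is the representation-theoretic bookkeeping of the second paragraph: pinning down the Brauer tree of the principal block of $\mathbb{F}_p[PSL_2(\mathbb{F}_p)]$, confirming that every projective there has Loewy length $3$ so that the clean three-step filtration genuinely exists, and computing the layer dimensions exactly; a secondary point is to ensure that Theorem~\ref{main} may be applied to the two blocks separately, so that the semisimple Steinberg block is charged only its dimension rather than being absorbed into the top layer of a three-step filtration.
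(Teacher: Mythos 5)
Your overall route --- the radical filtration of $\mathbb F_p[PSL_2(\mathbb F_p)]$ together with the Brauer-tree description of the principal block --- is a legitimate and genuinely different path from the paper's, which never mentions the Jacobson radical but instead builds ideals $I_1,I_2,I_3$ inside $\overline{\mathbb F}_p[GL_2(\mathbb F_p)]$ out of matrix coefficients and verifies $I_1I_3=0$ and $I_1I_2\subseteq I_3$ by hand (Chevalley--Warning-type character sums and a graph argument on Jordan--H\"older factors), only then restricting to $PSL_2(\mathbb F_p)$. Your structural inputs check out: the principal block has Loewy length $3$, the layer dimensions agree with the paper's $\frac{p(p-1)(p-2)}{6}$, $\frac{p(p^2-7)}{6}$, $\frac{p(p-1)(p-2)}{6}$, and the weights $1,\lambda^3,\lambda^6$ over $3\lambda^2$ for the principal block are exactly what Theorem \ref{main} and Lemma \ref{alternateformulation} produce from $J^3=0$ (jumps at $\alpha=0,1/2$, followed by the substitution $\lambda\mapsto\lambda^6$). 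What you buy is conceptual clarity and a ready-made generalization target (Loewy lengths of blocks); what the paper's route buys is a self-contained verification that avoids citing the Brauer tree.

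The gap is in the Steinberg block. Theorem \ref{main} takes a single filtration of the whole group algebra and divides the entire quantity $|H|+\int_0^1\dim(I_\alpha)\lambda^\alpha\log\lambda\,d\alpha$ by $\lambda^{1/3}$; it cannot be applied to the two blocks separately with the results added. If the block $M_p(\mathbb F_p)$ is given the trivial one-step filtration (dropped at $\alpha=0^+$), it enters the numerator with weight $\lambda^0=1$ and therefore contributes $\frac{2p}{(p^2-1)\lambda^2}$ to $\delta_p$ after the normalization, not the $\lambda$-independent $\frac{2p}{p^2-1}$ your accounting assumes. Since the optimal $\lambda$ is strictly below $1$, this gives a constant strictly larger than the stated $\delta_p$ (the limit as $p\to\infty$ survives, but the exact formula does not). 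The fix is precisely what the paper's $I_1$ encodes: keep the Steinberg block inside the filtration up to level $\alpha=1/3$, i.e., set $I_\alpha=J\oplus M_p$ for $0<\alpha\le 1/3$, $I_\alpha=J$ for $1/3<\alpha\le 1/2$, and $I_\alpha=J^2$ for $\alpha>1/2$. The additional triple products then have to be checked, and they do vanish: $M_p$ annihilates the principal block on both sides, and three indices all $\le 1/3$ cannot sum to more than $1$, so a triple product of $M_p$ with itself never occurs; every relevant product therefore collapses to a power of $J$ of degree at least $3$. With that modification your computation yields exactly the stated $\delta_p$.
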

 
 This bound for the density is stronger than the bound of Theorem \ref{main2} in this special case, because $\lim_{p \to \infty} \delta_p<1$, while the constant in Theorem \ref{main2} converges to $1$ as $|H|$ goes to $\infty$.

We hope this type of bound can be generalized to larger classes of finite simple groups. The classification of finite simple groups suggests a potential path to proving bounds for all finite simple by successively handling larger and larger families, but even without relying on the classification, bounds for a restricted class of groups might have applications. One natural next step to take would be to prove similar, or stonger, bounds in $PSL_2(\mathbb F_{p^n})$.

I would like to thank Eric Naslund, David Speyer, and Chris Umans for helpful conversations.

\section{Soft Argument}

For a finite-dimensional algebra $R$ over a field $k$, the multiplication tensor of $R$ is a tensor in $R^\vee \otimes R^\vee \otimes R$. If we fix a basis $e_1,\dots,e_n$ of $R$, we can write the tensor as a function of $3$ variables whose values are the structure constants expressing a product of two basis vectors as a linear combination of basis vectors. Following \cite{BCCGNSU}, we will always view tensors as functions in this way.

\begin{lemma}\label{instability} Let $R$ be a finite-dimensional algebra over a field $k$. \begin{enumerate}

\item If $R$ is not semisimple, then the multiplication tensor is unstable (in the sense of  \cite[Definition 4.2]{BCCGNSU}).

\item Let $I$ be an ideal of $R$ such that $I^2=0$. Then the instability (in the sense of \cite[Definition 4.4]{BCCGNSU}) of the multiplication tensor is at least $\frac{\dim_k I}{3 \dim_k R}$

\end{enumerate} \end{lemma}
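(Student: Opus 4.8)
\emph{Proof sketch.} The plan is to prove (2) first and to deduce (1) from it, since a tensor whose instability (\cite[Definition 4.4]{BCCGNSU}) is strictly positive is in particular unstable (\cite[Definition 4.2]{BCCGNSU}). For (1): if $R$ is not semisimple, its Jacobson radical $J$ is a nonzero nilpotent two-sided ideal; taking $\ell \ge 2$ minimal with $J^\ell = 0$ and setting $I = J^{\ell-1}$ produces a nonzero two-sided ideal with $I^2 = J^{2\ell-2} \subseteq J^\ell = 0$, so part (2) applies to this $I$ and gives positive instability. (Equivalently, the one-parameter subgroup written down below for this $I$ already witnesses that the multiplication tensor is unstable.)

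For (2), write $N = \dim_k R$ and $m = \dim_k I$, and fix a basis $e_1, \dots, e_N$ of $R$ with $e_1, \dots, e_m$ a basis of $I$. Viewing the multiplication tensor $T$ as a function of three variables in the usual way --- $T_{ijk}$ is the coefficient of $e_k$ in $e_i e_j$ --- the first step is to locate its support. Since $I^2 = 0$, no support entry has both $i \le m$ and $j \le m$; and since $I$ is a two-sided ideal, whenever exactly one of $i,j$ is $\le m$ we have $e_i e_j \in I$, which forces $k \le m$ for such a support entry. Hence every $(i,j,k)$ in the support satisfies either (a) $i > m$ and $j > m$, with $k$ arbitrary, or (b) exactly one of $i, j$ is $\le m$, and $k \le m$. (We use here that $R$ is unital, so $I \ne R$ and $m < N$; if $R^2 = 0$ the support is empty and $T$ is trivially maximally unstable.)

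The second step is to assign a weight to each index so as to destabilize $T$, and then read off the normalized weight. On the first two index sets put $a_i = b_i = -\frac{N-m}{N}$ for $i \le m$ and $a_i = b_i = \frac{m}{N}$ for $i > m$; on the third put $c_k = \frac{N-m}{N}$ for $k \le m$ and $c_k = -\frac{m}{N}$ for $k > m$. Each of $a, b, c$ sums to zero, so this defines a genuine one-parameter subgroup of the relevant product of special linear groups, and each of the three weight vectors takes values in an interval of length exactly $1$. A short check of cases (a) and (b) above shows $a_i + b_j + c_k \ge \frac{m}{N}$ for every $(i,j,k)$ in the support, with equality in case (b); so this one-parameter subgroup witnesses that $T$ is unstable. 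Its normalized weight --- the minimum support weight $\frac{m}{N}$ divided by the normalization of \cite[Definition 4.4]{BCCGNSU}, which for these centered, spread-$1$ weight vectors equals $3$ --- is $\frac{m}{3N} = \frac{\dim_k I}{3 \dim_k R}$; since instability is the supremum of such normalized weights over all one-parameter subgroups, this finishes (2).

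The main obstacle, and what forces the rather rigid weights above, is keeping precise track of the normalization in \cite[Definition 4.4]{BCCGNSU} so that the constant comes out to exactly $\frac{1}{3}$ rather than off by a bounded factor: the weights are chosen to make each of the three tensor factors contribute exactly $1$ to the normalization while keeping the support margin $\frac{m}{N}$ as large as possible. The support computation in the first step is otherwise routine, and it is there --- not in the one-parameter subgroup --- that two-sidedness of $I$ is used, since it is what guarantees $e_i e_j \in I$ regardless of which of the two factors lies in $I$.
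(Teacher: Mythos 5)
Your proof is correct and follows essentially the same route as the paper's: deduce (1) from (2) via the last nonzero power of the Jacobson radical, then for (2) pick a basis adapted to $I$, observe that $I^2=0$ and two-sidedness of $I$ confine the support of the tensor, and assign weights that are $1$ on $I$-indices in the first two slots and $1$ on non-$I$-indices in the third. Your centered, trace-zero weights are just an affine renormalization of the paper's $\{0,1\}$-valued ones, so the margin $m/N$ and the normalization $3$ come out identically.
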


\begin{proof} First note that (2) implies (1). Indeed, if $R$ is not semisimple, then the Jacobson radical of $R$ is a nontrivial nilpotent idea.  Let $I$ be the largest power of the Jacobson radical that is nontrivial. Then certainly $I^2=0$. Therefore by part (2), the instability of the multiplication tensor is positive, so the multiplication tensor is unstable.

Now let us prove (2). Let $n = \dim_k R$ and $m=\dim_k I$. Take a basis $e_1,\dots,e_n$ for $R$ such that $e_1,\dots, e_m$ all lie in $I$.  Define weights $u_i= v_i  = 1$ if $i \leq m$ and $0$ if $i>m$. Define the weight $w_k$ to be $0$ if $i \leq m$ and $1$ if $i>m$. 

Let $r_{abc}$ be the structure constants of the multiplication tensor, i.e. $e_a e_b = \sum_{k=1}^n r_{abc} e_c $. 

Observe that if $u_a+v_b+w_c > 1$, then $r_{abc}=0$. Indeed, suppose that more than one of the events $a \leq m$, $b \leq m$, $c>m$ occur; we will show that $r_{abc}=0$. If $a\leq m$ and $b \leq m$ then $e_a \in I$ and $e_b\in I$, so $e_ae_b=0$ because $I^2=0$, thus $r_{abc}=0$ . Otherwise, we must have $c> m$ and either $a\leq m$ or $b\leq m$. In this case, we have either $e_a\in I$ or $e_b\in I$, so either way $e_a e_b \in I$, thus $r_{abc}=0$ because $c>m$.

Therefore if $r_{abc}\neq 0$, then $u_a+v_b+w_c \leq 1$. Hence we may write the multiplication tensor in terms of the basis $e_1,\dots,e_n$ and the dual basis $e_1',\dots,e_n'$ as
\[ \sum_{a,b,c | u_a+v_b+w_c \leq 1}  r_{abc} e_a' e_b' e_c .\]
So we may take $R=1$ in \cite[Definition 4.4]{BCCGNSU}. Then because
\[u_{\avg} + v_{\avg}+ w_{\avg}= \frac{m}{n}+ \frac{m}{n} + \frac{n-m}{n} = 1 + \frac{m}{n}=R +\frac{m}{n} \]
and \[u_{\max}-u_{\inf}+v_{\max}-v_{\inf}+w_{\max}-w_{\inf}=1-0+1-0+1-0+1-0 = 3,\] we may take \[\epsilon= \frac{m}{3n}\] as desired.

\end{proof}

\begin{lemma}\label{rank}  Let $R$ be a finite-dimensional algebra over a field $k$. If $R$ is not semisimple, then there exists a constant $\delta<1$ such that the slice rank of the multiplication tensor of $R^{\otimes n}$ is at most $3\delta^n (\dim_k R)^n$. \end{lemma}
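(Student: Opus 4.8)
The plan is to combine Lemma~\ref{instability} with a three–way partition argument in the spirit of \cite{BCCGNSU}. First I would record that, with respect to the tensor–product basis $e_{i_1}\otimes\cdots\otimes e_{i_n}$ of $R^{\otimes n}$ built from a fixed basis $e_1,\dots,e_N$ of $R$ (write $N=\dim_k R$), the multiplication tensor of $R^{\otimes n}$ is exactly the $n$-th tensor power $T^{\otimes n}$ of the multiplication tensor $T$ of $R$, since the structure constants multiply coordinatewise. Because $R$ is not semisimple, let $I$ be the largest nonzero power of its Jacobson radical, so that $I^2=0$ and $\dim_k I\geq 1$. By Lemma~\ref{instability}(2) the instability of $T$ is at least $\epsilon:=\dim_k I/(3\dim_k R)>0$; more importantly, the proof of that lemma produces explicit weights $u,v,w$ on $\{1,\dots,N\}$ with $u_a+v_b+w_c\leq 1$ whenever $r_{abc}\neq 0$, with $u_{\avg}=v_{\avg}=\dim_k I/\dim_k R$ and $w_{\avg}=1-\dim_k I/\dim_k R$, so that $u_{\avg}+v_{\avg}+w_{\avg}=1+3\epsilon$.

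Next I would run the partition. If $(\vec a,\vec b,\vec c)$ indexes a nonzero entry of $T^{\otimes n}$, then $\prod_i r_{a_ib_ic_i}\neq 0$, so $u_{a_i}+v_{b_i}+w_{c_i}\leq 1$ for each $i$, and hence
\[ \sum_{i=1}^n u_{a_i}+\sum_{i=1}^n v_{b_i}+\sum_{i=1}^n w_{c_i}\;\leq\; n. \]
Comparing with $u_{\avg}+v_{\avg}+w_{\avg}=1+3\epsilon$, the total deficit $\sum_i(u_{\avg}-u_{a_i})+\sum_i(v_{\avg}-v_{b_i})+\sum_i(w_{\avg}-w_{c_i})$ is at least $3\epsilon n$, so at least one of the three coordinate sums is at most $n$ times its average minus $\epsilon n$. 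I would partition the support of $T^{\otimes n}$ into three sets according to which coordinate witnesses this (ties broken arbitrarily), write $T^{\otimes n}=T_1+T_2+T_3$ accordingly, and invoke subadditivity of slice rank, $\mathrm{sr}(T^{\otimes n})\leq \mathrm{sr}(T_1)+\mathrm{sr}(T_2)+\mathrm{sr}(T_3)$.

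Finally I would bound each piece. For the piece $T_1$ supported where $\sum_i u_{a_i}\leq(u_{\avg}-\epsilon)n$, its slice rank is at most the number of multi-indices $\vec a\in\{1,\dots,N\}^n$ that can occur in the first slot, namely at most $\#\{\vec a:\sum_i u_{a_i}\leq(u_{\avg}-\epsilon)n\}$. Since $u$ equals $1$ on $\dim_k I$ basis vectors and $0$ on the other $N-\dim_k I$, a Chernoff/Hoeffding estimate bounds this count by $\delta_1^n N^n$ with $\delta_1<1$ depending only on $\dim_k I$ and $\dim_k R$; the same bound holds for the $v$- and $w$-pieces. Setting $\delta=\max(\delta_1,\delta_2,\delta_3)<1$ gives $\mathrm{sr}(T^{\otimes n})\leq 3\delta^n(\dim_k R)^n$, as desired.

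The part I expect to require the most care is verifying that the deficit argument genuinely produces a \emph{uniform} positive gap $\epsilon$: this is exactly where non-semisimplicity enters and would be vacuous for semisimple $R$, and it is precisely the quantitative content supplied by Lemma~\ref{instability}. The remaining ingredients — subadditivity of slice rank over a partition of the support, the bound of slice rank by the size of any one axis of the support, and making the large-deviation estimate explicit enough to conclude $\delta<1$ — are routine. (This whole argument is a special case of the instability-to-slice-rank machinery of \cite[Section~4]{BCCGNSU}, and one could alternatively deduce the statement by citing it directly, at the cost of tracking the constant.)
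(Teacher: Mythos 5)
Your proposal is correct and follows essentially the same route as the paper: the paper simply cites \cite[Theorem 4.10]{BCCGNSU} as a black box, obtaining slice rank at most $3(\dim_k R)^n e^{-2n\,\instability(F)^2}$ and taking $\delta = e^{-2\instability(F)^2}<1$ via Lemma~\ref{instability}, whereas you unpack that theorem's proof (the weight/deficit partition plus the Hoeffding count). Since the deficit argument you sketch is exactly the content of that cited theorem, there is no substantive difference.
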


\begin{proof}  Let $F$ be the multiplication tensor of $R$. By \cite[Theorem 4.10]{BCCGNSU}, the slice rank of the tensor power $F^{\otimes n}$ is at most $3 (\dim_k R)^n e^{-2n \operatorname{instability}(F)^2}$. Letting $\delta= e^{ -2 \operatorname{instability}(F)^2}<1$ (as $\instability(F)>0$ by Lemma \ref{instability}), and observing that $F^{\otimes n}$ is the multiplication tensor of $R^{\otimes n}$, we get the stated result. \end{proof}

Define the multiplication tensor of a group to be the multiplication tensor of its group algebra - the function $F(s,t,u)$ defined for $s,t,u\in G$ that is $1$ if $st=u$ and $0$ otherwise.

\begin{lemma}\label{group} Let $G$ be a finite group, and let $p$ be a prime dividing $|G|$. There exists a constant $\delta<1$ such that the slice rank of the multiplication tensor of $G^n$ over $\mathbb F_p$ is at most $3 \delta^n |G|^n$. \end{lemma}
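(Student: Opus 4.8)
The plan is to reduce the statement directly to Lemma \ref{rank} applied to the group algebra $R=\mathbb F_p[G]$. First I would observe that the multiplication tensor of $G^n$, as just defined, is precisely the multiplication tensor of the group algebra $\mathbb F_p[G^n]$ in the function-valued convention fixed at the start of the section: in the basis of $\mathbb F_p[G^n]$ given by group elements, the structure constant $r_{stu}$ equals $1$ when $st=u$ and $0$ otherwise. Moreover the canonical isomorphism $\mathbb F_p[G^n]\cong \mathbb F_p[G]^{\otimes n}$ of $\mathbb F_p$-algebras carries the group-element basis on the left to the tensor product of the group-element bases on the right, so under this identification the multiplication tensor of $G^n$ is exactly $F^{\otimes n}$, where $F$ is the multiplication tensor of $\mathbb F_p[G]$. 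Since $\dim_{\mathbb F_p}\mathbb F_p[G]=|G|$, it therefore suffices, by Lemma \ref{rank}, to know that $\mathbb F_p[G]$ is not semisimple.

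For this last point I would invoke the converse direction of Maschke's theorem: as $p$ divides $|G|$, the algebra $\mathbb F_p[G]$ is not semisimple. One can also see this by hand: the element $N=\sum_{g\in G} g$ is central and nonzero, and $N^2=|G|\cdot N=0$ in characteristic $p$, whereas the center of a semisimple algebra over a field is a product of fields and so contains no nonzero nilpotent. In fact $N$ spans an ideal $I$ with $I^2=0$, so Lemma \ref{instability}(1) already yields that $F$ is unstable, which is exactly the input Lemma \ref{rank} uses.

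Combining these: Lemma \ref{rank} applied to $R=\mathbb F_p[G]$ produces a constant $\delta<1$ — explicitly $\delta=e^{-2\,\instability(F)^2}$ — such that the slice rank of $F^{\otimes n}$ is at most $3\delta^n|G|^n$, and by the identification of the previous paragraph this is the slice rank of the multiplication tensor of $G^n$ over $\mathbb F_p$, which is the claim.

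This deduction is short and I do not expect a genuinely hard step; the only points requiring care are the bookkeeping that tensor powers of algebras correspond to tensor powers of multiplication tensors in the chosen convention (so that the multiplication tensor of $G^n$ really is $F^{\otimes n}$), and recalling why $p\mid|G|$ forces $\mathbb F_p[G]$ to fail to be semisimple. Everything else is a direct appeal to Lemmas \ref{instability} and \ref{rank}.
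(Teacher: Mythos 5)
Your proposal is correct and follows exactly the paper's own argument: identify the multiplication tensor of $G^n$ with $F^{\otimes n}$ via $\mathbb F_p[G^n]\cong\mathbb F_p[G]^{\otimes n}$, note that $\mathbb F_p[G]$ is not semisimple (e.g.\ because $\sum_{g\in G}g$ spans an ideal squaring to zero in characteristic $p$), and apply Lemma \ref{rank}. No gaps; the extra bookkeeping you supply about bases and structure constants is fine but not needed beyond what the paper states.
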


\begin{proof} Set $R = \mathbb F_p[G]$. By the converse to Maschke's theorem, $R$ is not semisimple. For instance, the vector space generated by the element $\sum_{g \in G} g$ is a nontrivial two-sided ideal that squares to $0$. Note that \[\mathbb F_p[G^n] = \mathbb F_p[G]^{\otimes n}=R^{\otimes n}\] and then apply Lemma \ref{rank}. \end{proof}

\begin{lemma}\label{tricolored} Let $H$ be a finite group. For any field $k$, the cardinality of a multiplicative matching in $H$ is at most the slice rank of the multiplication tensor of $H$ over $k$. \end{lemma}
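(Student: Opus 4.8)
The plan is to realize the matching $M$ as a ``diagonal'' sub-tensor of the multiplication tensor $F$ of $H$, and then apply the standard fact that diagonal tensors have full slice rank. So first I would fix a multiplicative matching $(S,T,U)$ in $H$ with matching set $M\subseteq S\times T\times U$ and cardinality $N=|M|=|S|=|T|=|U|$. Because $M$ is a perfect matching, the three coordinate projections $M\to S$, $M\to T$, $M\to U$ are bijections, so I may enumerate $M=\{(s_i,t_i,u_i):1\le i\le N\}$ in such a way that $i\mapsto s_i$, $i\mapsto t_i$, $i\mapsto u_i$ are bijections onto $S$, $T$, $U$ respectively, with $s_it_iu_i=1$ for every $i$.

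The next step handles the mismatch between the relation $stu=1$ defining $M$ and the relation $st=u$ defining $F$, which forces us to invert one coordinate. Set $U^{-1}=\{u^{-1}:u\in U\}$ and consider the restriction of the function $F$ to the domain $S\times T\times U^{-1}$. Restricting the ranges of the three variables carries any slice-rank decomposition of $F$ to a decomposition of $F|_{S\times T\times U^{-1}}$ with no more terms, so the slice rank of $F|_{S\times T\times U^{-1}}$ is at most that of $F$. On the other hand, for $s\in S$, $t\in T$, $v\in U^{-1}$ we have $F(s,t,v)=1$ iff $st=v$ iff $stv^{-1}=1$ iff $(s,t,v^{-1})\in M$; hence $F|_{S\times T\times U^{-1}}$, viewed as a function, is supported exactly on the $N$ points $(s_i,t_i,u_i^{-1})$, $1\le i\le N$, with value $1$ at each. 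Since $i\mapsto s_i$, $i\mapsto t_i$ and $i\mapsto u_i^{-1}$ are all bijections (inversion being a bijection of $H$, so $\{u_i^{-1}\}=U^{-1}$ with each element hit once), after relabelling the three index sets by $\{1,\dots,N\}$ this restricted function is precisely the diagonal tensor $\sum_{i=1}^N e_i\otimes e_i\otimes e_i$ over $k$.

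Finally I would invoke the basic lower bound, due to Tao \cite{Tao} (see also \cite{BCCGNSU}), that the diagonal tensor $\sum_{i=1}^N e_i\otimes e_i\otimes e_i$ has slice rank exactly $N$ over an arbitrary field. Relabelling index sets does not change the slice rank, so the slice rank of $F|_{S\times T\times U^{-1}}$ is $N$, and combining with the previous paragraph gives $N\le$ (slice rank of $F$), as claimed. I do not expect a genuine obstacle here: the entire content is the bookkeeping that makes $F|_{S\times T\times U^{-1}}$ literally a diagonal tensor — in particular the inversion of the third coordinate, which reconciles the group relation $stu=1$ with the algebra relation $st=u$ — together with the appeal to the diagonal-tensor lower bound. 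The latter holds over every field and, if one preferred not to cite it, could be reproved in a few lines directly from the definition of slice rank.
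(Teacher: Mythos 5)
Your proof is correct and follows essentially the same route as the paper: both reduce to the diagonal tensor via the observation that inverting the third coordinate identifies the relation $stu=1$ with $st=u$ (the paper inverts the variable of the full tensor first and then restricts to $S\times T\times U$; you restrict to $S\times T\times U^{-1}$ instead, which is the same argument), and both then invoke the standard fact that the diagonal tensor has full slice rank.
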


\begin{proof} Consider the function $F$ on $H \times H \times H$ where $F(s,t,u)$ is $1$ if $stu=1$ and $0$ otherwise. This is the same as the multiplication tensor but with the last variable inverted, so it has the same slice rank. Let $ S\times T\times U$ be a multiplicative matching in $H$. The slice rank of $F$ is at least the slice rank of the restriction of $F$ to $S \times T \times U$, which is the diagonal tensor, whose slice rank is $|S|$ by \cite[Lemma 4.7]{BCCGNSU}. So the slice rank of the multiplication tensor is at least the cardinality of the multiplicative matching.\end{proof}

\begin{theo}\label{set} Let $G$ be a nontrivial finite group. There exists a constant $\delta<1$ such that any multiplicative matching in $G^n$ has size at most $\delta^n G^n$. \end{theo}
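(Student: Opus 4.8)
The plan is to assemble the lemmas that precede the statement. The theorem claims that any multiplicative matching in $G^n$ has size at most $\delta^n |G|^n$ for some $\delta < 1$, and the chain of implications needed is already fully set up in Lemmas \ref{rank}, \ref{group} and \ref{tricolored}.

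First I would pick a prime $p$ dividing $|G|$; such a prime exists because $G$ is nontrivial. Then I would apply Lemma \ref{group} with this prime to the group $G$: it produces a constant $\delta < 1$ such that the slice rank of the multiplication tensor of $G^n$ over $\mathbb F_p$ is at most $3\delta^n |G|^n$. Next I would invoke Lemma \ref{tricolored} with $H = G^n$ and $k = \mathbb F_p$, which says that the cardinality of any multiplicative matching in $G^n$ is at most the slice rank of the multiplication tensor of $G^n$ over $\mathbb F_p$. Chaining these two bounds gives that a multiplicative matching in $G^n$ has size at most $3\delta^n |G|^n$.

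The only remaining wrinkle is the factor of $3$: the statement asserts a clean bound of $\delta^n |G|^n$ rather than $3 \delta^n |G|^n$. This is absorbed by a trivial reparametrization. I would choose any $\delta'$ with $\delta < \delta' < 1$; then $3\delta^n \le \delta'^n$ for all $n$ sufficiently large, say $n \ge n_0$, and by further shrinking — i.e. replacing $\delta'$ by $\max(\delta', \sup_{1 \le n < n_0} (3\delta^n)^{1/n})$, which is still strictly less than $1$ since it is a maximum of finitely many numbers each less than $1$ — we obtain a single constant, still called $\delta$, that works for all $n \ge 1$. Since the claimed constant $\delta$ is allowed to depend on $G$ and is not required to be explicit, this is harmless.

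There is no real obstacle here; the theorem is a bookkeeping corollary of the preceding development, and the one thing to be careful about is simply folding the constant $3$ into the exponential rate, which is why the statement is phrased with an unspecified $\delta$.
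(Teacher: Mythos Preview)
Your main argument—pick a prime $p$ dividing $|G|$, apply Lemma~\ref{group}, then Lemma~\ref{tricolored}—is exactly the paper's approach and is correct up to the bound $3\delta^n|G|^n$.

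The flaw is in your absorption of the factor $3$. You claim that $(3\delta^n)^{1/n}$ is less than $1$ for each $n < n_0$, but this is false: $(3\delta^n)^{1/n} = 3^{1/n}\delta$, and for $n=1$ this is $3\delta$, which exceeds $1$ whenever $\delta > 1/3$. Indeed, $n_0$ is by definition the threshold past which $3\delta^n \le (\delta')^n$, so for $n < n_0$ the inequality goes the wrong way and your proposed constant $\max\bigl(\delta', \sup_{1\le n < n_0}(3\delta^n)^{1/n}\bigr)$ need not be less than $1$. For small $n$ the bound $3\delta^n|G|^n$ may be worse than the trivial bound $|G|^n$, so some genuine additional input is required.

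The paper handles this instead by the standard amplification (tensor-power) trick: a multiplicative matching of size $|S|$ in $G^n$ yields, by taking $k$-fold Cartesian products of $S$, $T$, $U$, a multiplicative matching of size $|S|^k$ in $G^{nk}$. Applying the bound to $G^{nk}$ gives $|S|^k \le 3\delta^{nk}|G|^{nk}$, hence $|S| \le 3^{1/k}\delta^n|G|^n$ for every $k\ge 1$; letting $k\to\infty$ removes the $3$ and gives $|S| \le \delta^n|G|^n$ with the \emph{same} $\delta$, uniformly in $n$.
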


\begin{proof} Since $G$ is nontrivial, there exists a prime $p$ dividing $|G|$. By Lemma \ref{group}, the slice rank of the multiplication tensor of $G^n$ is at most $3 \delta^n |G|^n$. By Lemma \ref{tricolored}, the size of any multiplicative matching in $G^n$ is at most $3 \delta^n |G|^n$. We can remove the factor of $3$ by a standard amplification argument - a multiplicative matching in $G^n$ of size $|S|$ gives a multiplicative matching in $|G|^{nk}$ of size $|S|^k$, hence $|S|\leq 3^{1/k} \delta^n|G|^n$, and taking $k$ to $\infty$, $3^{1/k}$ goes to $0$.

\end{proof}

\section{Main Results}

We will now give a second bound for multiplicative matchings using a more detailed argument. This will apply to a power of a finite group $H^n$ or more generally to any group containing $H^n$ as a normal subgroup. The final bound will depend on the existence of certain filtrations of the group algebra $k[H]$ over a field $k$. First we have two lemmas that provide given the existence of single ideal in $k[H^n]$ with certain properties. We  will then construct the ideal using a filtration of $k[H]$.

The following lemma is a variant of \cite[Theorem 2]{Petrov}:

\begin{lemma}\label{slicing} Let $G$ be a finite group and $k$ a field. Let $K$ be a $k$-subspace of the group algebra $k[G]$ such that $K^3=0$. Then the slice rank of the function $1_{xyz=1}: G \times G \times G \to k$ is at most $3  (|G| - \dim K)$. \end{lemma}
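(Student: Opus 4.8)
The plan is to exhibit an explicit slice-rank decomposition of the function $F(x,y,z)=1_{xyz=1}$ on $G\times G\times G$ whose number of terms is at most $3(|G|-\dim K)$, using the hypothesis $K^3=0$. The key idea, following Petrov, is to regard $F$ as (essentially) the multiplication tensor of $k[G]$ and to use the subspace $K$ to "remove" a chunk of that tensor: intuitively, restricting one of the three tensor slots to a complement of $K$ costs only $|G|-\dim K$ in that slot, and the error terms coming from $K$ can be absorbed into the other two slots because $K^3=0$ forces products of three elements of $K$ to vanish.

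Concretely, I would first choose a basis of $k[G]$ adapted to $K$: pick $e_1,\dots,e_m$ a basis of $K$ (with $m=\dim K$) and extend to a basis $e_1,\dots,e_{|G|}$ of $k[G]$, writing $e'_i$ for the dual basis. The multiplication tensor of $k[G]$ is $M=\sum_{a,b,c} r_{abc}\, e'_a\otimes e'_b\otimes e_c$, and $F$ has the same slice rank as $M$ (same argument as in Lemma~\ref{tricolored}: inverting the last coordinate is a permutation of the support and does not change slice rank, and the group multiplication tensor in a permuted coordinate is $M$ in the standard basis). Now split the $c$-sum: the part with $c>m$ contributes $\sum_{c>m} e_c\otimes\bigl(\sum_{a,b} r_{abc}\,e'_a\otimes e'_b\bigr)$, which is a sum of $|G|-m$ slices along the third coordinate. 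What remains is $\sum_{a,b}\sum_{c\le m} r_{abc}\, e'_a\otimes e'_b\otimes e_c$, i.e. the composition of $M$ with the projection $k[G]\to K$ in the output slot; call this tensor $M_K$. It suffices to show the slice rank of $M_K$ is at most $2m$.

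For $M_K$: its value $M_K(x,y,\cdot)$ is the projection of $xy$ onto $K$. Split the $a$-sum at $m$. The part with $a\le m$ gives $\sum_{a\le m} e'_a\otimes(\text{stuff})$, contributing $m$ slices along the first coordinate. The part with $a>m$ is $\sum_{a>m, b} e'_a\otimes e'_b\otimes (\text{projection of } e_a e_b \text{ onto } K)$; restricting the first slot to the span of $\{e_a : a>m\}$, a complement $C$ of $K$, this is the map $C\times k[G]\to K$, $(x,y)\mapsto \pi_K(xy)$. The claim is that this last tensor has slice rank at most $m=\dim K$ — and here is where $K^3=0$ enters. Hmm, let me reconsider: I want to also peel off the $b$-variable. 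Split the $b$-sum at $m$: $b\le m$ gives $m$ more slices along the second coordinate; the remainder is the map $C\times C\to K$, $(x,y)\mapsto \pi_K(xy)$ with both $x,y$ ranging over a complement of $K$. Now $K^3=0$ should force this residual tensor to be zero (or at least to have small rank) — the main obstacle, and the part I'd need to think about carefully, is that $C$ is just a linear complement, not an ideal, so "$x,y\in C \Rightarrow \pi_K(xy)=0$" is not automatic; one really wants to arrange the complement so that $K\cdot C$ and $C\cdot K$ land appropriately, or instead use the ideal structure by taking $K$ to be (contained in) a two-sided ideal with small powers. I expect the correct bookkeeping is: among the three "cut" tensors one can always choose, for each of the three coordinates, to put the cut in the slot where it is cheapest, and the $K^3=0$ condition guarantees that after making all three cuts the leftover term vanishes, yielding the clean bound $3(|G|-\dim K)$ rather than the naive $3\cdot 2m$ plus error. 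Getting that vanishing to come out exactly right from $K^3=0$ — rather than from the stronger hypothesis that $K$ is an ideal — is the step I would need to verify most carefully, and is presumably the content of Petrov's argument that this lemma is adapting.
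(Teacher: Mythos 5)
There is a genuine gap: the concrete decomposition you wrote down cuts in the wrong direction in two of the three coordinates, and the residual you are left with does not vanish. You peeled off the slices with $c>m$ (cost $|G|-m$), then $a\le m$ (cost $m$), then $b\le m$ (cost $m$), leaving the terms with $a>m$, $b>m$, $c\le m$. As you yourself observe, $K^3=0$ says nothing about products of two elements \emph{outside} $K$, so this residual has no reason to vanish; and even if it did, you would have used $|G|+2m$ slices, which exceeds the claimed bound. The correct decomposition is the opposite one in every coordinate: peel off the terms with $a>m$ as slices in $x$, the terms with $a\le m$, $b>m$ as slices in $y$, and the terms with $a,b\le m$, $c>m$ as slices in $z$ --- each family costs $|G|-m$ slices, for a total of $3(|G|-m)$ --- and the leftover consists of the terms with $a,b,c$ \emph{all} $\le m$, which is exactly the case that $K^3=0$ kills. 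Your closing heuristic (``put the cut in the slot where it is cheapest'') points at the wrong principle: the point is not cheapness but that the uncut corner must be the one annihilated by the hypothesis.

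A second, smaller issue is your reformulation via the multiplication tensor $M=\sum r_{abc}\,e'_a\otimes e'_b\otimes e_c$ with a covariant output slot. The hypothesis $K^3=0$ does not translate cleanly into a statement about the coefficient of $e_ae_b$ along $e_c$ for $c\le m$; to use it you would have to pass to the orthogonal complement of $K$ under the trace pairing in the third slot, which is an unnecessary complication. The paper instead expands the symmetric function $1_{xyz=1}$ directly as $\sum_{a,b,c} r_{abc}\,e'_a(x)e'_b(y)e'_c(z)$, where duality gives $r_{abc}=\sum_{xyz=1}e_a(x)e_b(y)e_c(z)$, i.e.\ the coefficient of $1$ in the product of the three group-algebra elements $\sum_x e_a(x)x$, $\sum_y e_b(y)y$, $\sum_z e_c(z)z$. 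With all three slots treated symmetrically, the vanishing for $a,b,c\le m$ is immediate from $K^3=0$, and the batching described above finishes the proof. I recommend redoing the argument in this symmetric form.
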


\begin{proof} Let $n=|G|$ and $k= \dim K$. Let $e_1,\dots,e_k$ be a basis of $K$,  $e_{k+1},\dots,e_n$ an extension to a basis of $k[G]$. Let $e_1',\dots,e_n'$ be the dual basis of $e_1,\dots,e_n$. Then we may write the function $1_{xyz=1}$ as $\sum_{1 \leq a,b,c \leq n} r_{a,b,c} e_a'(x) e_b'(y) e_c'(z)$ where, by duality, $r_{a,b,c}$ must be  $ \sum_{x,y,z \in G} 1_{xyz=1} e_a(x) e_b(y) e_c(z)$. This is the same as the coefficient of $1$ in the element $(\sum_{x\in G} e_a(x) x) (\sum_{y \in G} e_b(y) y) (\sum_{z\in G} e_c(z) z )$ of the group algebra $k[G]$. If $a \leq k$, $b \leq k$, $c\leq k$, then that product is $0$, so that coefficient vanishes. Thus if $r_{a,b,c}\neq 0$, then either $a>k$, $b>k$, or $c>k$.

Using this, we may group the terms $r_{a,b,c} e_a'(x) e_b'(y) e_c'(z)$ with $r_{a,b,c}\neq 0$ into batches, where each batch is $e_a'(x)$ times a function of only $y$ and $z$, $e_b'(y)$ times a function of only $x$ and $z$, or $e_c' (z)$ times a function of only $x$ and $y$ for $a$, $b$, or $c$ greater than $k$. Each batch is a slice, so the slice rank is at most the number of batches, which is $3(n-k)$. \end{proof}

The following is a generalization of an argument of Chris Umans (in personal communication) from the special case $H =\mathbb F_p^n$:

\begin{lemma}\label{jumping} Let $G$ be a finite group, $H$ a normal subgroup, and $k$ a field. Let $J$ be a subspace of the group algebra $k[H]$ such that $J^3=0$ and that is invariant under the natural action of $G$ by conjugation on $H$. Then there exists a two-sided ideal $K$ of $k[G]$, of dimension at least $\frac{|G|}{|H|} \dim J$, satisfying $K^3=0$. \end{lemma}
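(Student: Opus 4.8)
The plan is to let $K$ be the two-sided ideal of $k[G]$ generated by $J$ and to show that conjugation-invariance forces $K$ to be, up to the index $[G:H]$, no bigger than $J$ while still having vanishing cube. The first step is to unpack the hypothesis: invariance of $J$ under conjugation says $gJg^{-1}=J$ for every $g\in G$ (each $ghg^{-1}$ lies in $H$ since $H$ is normal in $G$), so the identities $gj=(gjg^{-1})g$ and $jg=g(g^{-1}jg)$ for $g\in G$, $j\in J$ show that $k[G]\cdot J=J\cdot k[G]$ as subspaces of $k[G]$. I would then set $K:=k[G]\cdot J=J\cdot k[G]$; being stable under both left and right multiplication by $k[G]$, it is a two-sided ideal, and it contains $J$ because $k[G]$ is unital.

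For the cube, I would slide the copies of $J$ past the copies of $k[G]$ using $J\cdot k[G]=k[G]\cdot J$: first $K^2=(k[G]J)(k[G]J)=k[G](Jk[G])J=k[G](k[G]J)J=k[G]\,J^2$, and then $K^3=(k[G]\,J^2)(k[G]J)=k[G]\,J^2(Jk[G])=k[G]\,J^3\,k[G]=0$ by the assumption $J^3=0$.

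For the dimension, I would fix representatives $g_1,\dots,g_m$ of the left cosets of $H$ in $G$, where $m=|G|/|H|$, giving a vector-space decomposition $k[G]=\bigoplus_{i=1}^m g_i\,k[H]$. Since $J\subseteq k[H]$, expanding an arbitrary $\xi\in k[G]$ as $\sum_i g_i\eta_i$ with $\eta_i\in k[H]$ shows $\xi j\in\bigoplus_i g_i(k[H]\,J)$ for all $j\in J$ (the sum is direct because $g_i(k[H]J)\subseteq g_ik[H]$), while conversely each $g_i(k[H]J)\subseteq k[G]\cdot J$; hence $K=\bigoplus_{i=1}^m g_i\,(k[H]\,J)$. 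Therefore $\dim_k K=m\cdot\dim_k(k[H]\,J)\ge m\cdot\dim_k J=\frac{|G|}{|H|}\dim_k J$, using $k[H]\cdot J\supseteq J$.

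I do not expect a serious obstacle; every step is a manipulation of spans. The two places needing a little care are that $gJg^{-1}=J$ for \emph{group} elements $g$ genuinely yields the identity $k[G]\cdot J=J\cdot k[G]$ (it does, via the $gj=(gjg^{-1})g$ computation, and this is exactly where normality of $H$ enters), and that the displayed decomposition of $K$ is an equality rather than merely the obvious inclusion $\bigoplus_i g_i(k[H]J)\supseteq k[G]\cdot J$ — the reverse inclusion is what makes the dimension count work, and it follows from the expansion of $\xi j$ above.
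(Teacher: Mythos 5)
Your proof is correct and follows essentially the same route as the paper's: you take $K$ to be the two-sided ideal generated by $J$, use conjugation-invariance to commute $J$ past group elements so that $K^3$ collapses to a copy of $J^3=0$, and use the decomposition of $k[G]$ as a free $k[H]$-module on coset representatives to exhibit $|G|/|H|$ independent copies of $J$ inside $K$. Your verification of the identity $k[G]J=Jk[G]$ and of the direct-sum decomposition $K=\bigoplus_i g_i(k[H]J)$ is somewhat more explicit than the paper's terser argument, but the content is the same.
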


\begin{proof} Let $K$ be the two-sided ideal of $k[G]$ generated by $J$. Because $k[G]$ is generated as a vector space by elements $g\in G$,  $K$ is generated as a vector space by the subspaces of the form $g_1 J g_2$ for $g_1,g_2 \in G$. To check that $K^3=0$, it is sufficient to check that for all $g_1,g_2,g_3,g_4\in G$,  $g_1 J g_2 J g_3 J g_4=0$. Because $J$ is invariant under conjugation by the $g_i$, it commutes with the $g_i$, so this is the same as $g_1 g_2 J^3 g_3 g_4$, and $J^3$ vanishes by assumption.

Because $k[G]$ is a free $k[H]$-module of rank $|G|/|H|$,  $J k[G]$ contains a copy of $J^{|G|/|H|}$ generated by a $k[H]$-basis of $k[G]$. Because $K$ contains $J k[G]$, which contains $J^{|G|/|H|}$, which has dimension $\frac{|G|}{|H|} \dim J$, the dimension of $K$ is at least $\frac{|G|}{|H|} \dim J$. \end{proof}

\begin{remark} In the case $H = G$, the argument of Lemma \ref{jumping} shows that if there is a subspace $J$ of $k[G]$ with $J^3=0$ that is conjugacy-invariant, there is a two-sided ideal, at least as large, that also cubes to zero and is conjugacy-invariant. So we may as well restrict our attention to the construction of two-sided ideals satisfying the conditions of Lemma \ref{jumping}. \end{remark}

 Let $H$ be a finite group and $k$ a field. Suppose we have a filtration of $k[H]$ indexed by the interval $[0,1]$, i.e. for each $\alpha \in [0,1]$ we have a two-sided ideal $I_\alpha$ such that if $\alpha \leq \beta$ then $I_\beta \subseteq I_\alpha$. Assume also that $I_0=k[H]$ and, for all $\alpha$, $\bigcap_{\beta<\alpha} I_\beta = I_\alpha$.
 
 Suppose  that, for all triples of $\alpha,\beta,\gamma \in [0,1]$ with $ \alpha+\beta+\gamma>1$ we have $I_\alpha I_\beta I_\gamma =0$. 
 
We will use this filtration to bound the size of multiplicative matchings in $H^n$ and, under additional assumptions, any group containing $H^n$ as a normal subgroup.

 \begin{defi}\label{jjj} Let $J$ be the $k$-vector subspace of $k[H^n] = k[ H]^{\otimes n}$ generated by $I_{\alpha_1} \otimes I_{\alpha_2} \otimes \dots \otimes I_{\alpha_n}$ for all tuples $(\alpha_1,\dots,\alpha_n) \in [0,1]^n$ satisfying $\sum_{i=1}^n \alpha_i > n/3$. \end{defi}
 
 Because $I_\alpha$ is a two-sided ideal for each $\alpha$, $I_{\alpha_1} \otimes I_{\alpha_2} \otimes \dots \otimes I_{\alpha_n}$ is a two-sided ideal for each tuple $(\alpha_1,\dots,\alpha_n)$, and thus $J$ is a two-sided ideal.

  \begin{lemma}\label{cubesnought2}We have \[J^3=0.\] \end{lemma}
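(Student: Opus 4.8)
The plan is to reduce the claim $J^3 = 0$ to the hypothesis that $I_\alpha I_\beta I_\gamma = 0$ whenever $\alpha + \beta + \gamma > 1$, applied coordinate by coordinate. Since $J$ is spanned by the subspaces $I_{\alpha_1} \otimes \dots \otimes I_{\alpha_n}$ over tuples with $\sum_i \alpha_i > n/3$, and multiplication in $k[H^n] = k[H]^{\otimes n}$ is the tensor product of the multiplications in each factor, the product of three such generating subspaces
\[ \bigl( I_{\alpha_1} \otimes \dots \otimes I_{\alpha_n} \bigr) \bigl( I_{\beta_1} \otimes \dots \otimes I_{\beta_n} \bigr) \bigl( I_{\gamma_1} \otimes \dots \otimes I_{\gamma_n} \bigr) \]
equals $\bigotimes_{i=1}^n \bigl( I_{\alpha_i} I_{\beta_i} I_{\gamma_i} \bigr)$. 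So it suffices to show that for any three tuples with $\sum_i \alpha_i > n/3$, $\sum_i \beta_i > n/3$, $\sum_i \gamma_i > n/3$, there is at least one coordinate $i$ with $\alpha_i + \beta_i + \gamma_i > 1$; for that coordinate the factor $I_{\alpha_i} I_{\beta_i} I_{\gamma_i}$ vanishes by hypothesis, hence the whole tensor product vanishes.

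First I would establish this combinatorial fact. Summing the three inequalities gives $\sum_{i=1}^n (\alpha_i + \beta_i + \gamma_i) > n$. If we had $\alpha_i + \beta_i + \gamma_i \le 1$ for every $i$, then $\sum_i (\alpha_i + \beta_i + \gamma_i) \le n$, a contradiction. Hence some coordinate $i$ satisfies $\alpha_i + \beta_i + \gamma_i > 1$, as needed. This handles the product of three \emph{generators} of $J$.

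Finally I would pass from generators to all of $J^3$ by bilinearity: $J$ is the span of the generating subspaces $G_{\vec\alpha} := I_{\alpha_1} \otimes \dots \otimes I_{\alpha_n}$, so $J^3$ is spanned by products $G_{\vec\alpha} \cdot G_{\vec\beta} \cdot G_{\vec\gamma}$, each of which we have just shown is zero; therefore $J^3 = 0$. I do not expect any real obstacle here — the only mild point to be careful about is that the product of spans is the span of products (so that it genuinely suffices to check generators), and that the multiplication of pure tensors in $k[H]^{\otimes n}$ factors through the coordinates, which is immediate from the definition of the algebra structure on a tensor product.
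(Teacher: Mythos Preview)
Your proposal is correct and follows essentially the same approach as the paper: reduce to triples of generating subspaces $I_{\alpha_1}\otimes\cdots\otimes I_{\alpha_n}$, factor the triple product coordinatewise, and use the pigeonhole observation $\sum_i(\alpha_i+\beta_i+\gamma_i)>n$ to find a coordinate with $\alpha_i+\beta_i+\gamma_i>1$. You are slightly more explicit than the paper about the bilinearity step that justifies passing from generators to all of $J^3$, but this is the same argument.
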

 
 \begin{proof} It is sufficient to prove that any triple of $n$-tuples $(\alpha_1,\dots,\alpha_n), (\beta_1,\dots,\beta_n),(\gamma_1,\dots,\gamma_n)$, satisfying $\sum_{i=1}^n \alpha_i>n/3, \sum_{i=1}^n \beta_i>n/3,\sum_{i=1}^n \gamma_i > n/3$, we have \[(I_{\alpha_1} \otimes \dots \otimes I_{\alpha_n})(I_{\beta_1} \otimes \dots I_{\beta_n}) (I_{\gamma_1} \otimes \dots I_{\gamma_n}) =0 .\]
 
Observe that 

\[(I_{\alpha_1} \otimes \dots \otimes I_{\alpha_n})(I_{\beta_1} \otimes \dots \otimes  I_{\beta_n}) (I_{\gamma_1} \otimes \dots \otimes I_{\gamma_n}) = I_{\alpha_1}  I_{\beta_1} I_{\gamma_1} \otimes \dots \otimes I_{\alpha_n} I_{\beta_n} I_{\gamma_n}. \]

Under our assumptions, $\sum_{i=1}^n (\alpha_i+\beta_i+\gamma_i)>n$, so for some $i$ we have $\alpha_i+\beta_i+\gamma_i>1$, hence $I_{\alpha_i} I_{\beta_i} I_{\gamma_i}=0$ and this tensor product vanishes as desired.\end{proof}
 
   \begin{lemma}\label{invariant} Suppose that for each $\alpha$, $I_\alpha$ is invariant under the action of $\Aut(H)$ on $k[H]$.
   
   Then $J$ is invariant under the natural action of the wreath product $\Aut(H) \wr S_n$ on $H^n$. \end{lemma}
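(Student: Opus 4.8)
The plan is to reduce the statement to checking invariance under a generating set of $\Aut(H)\wr S_n$, using the fact that $J$ is \emph{spanned} by the subspaces $I_{\alpha_1}\otimes\cdots\otimes I_{\alpha_n}$ with $\sum_{i=1}^n\alpha_i>n/3$; it then suffices to show that each generator carries this spanning family into itself. I would use the decomposition $\Aut(H)\wr S_n=\Aut(H)^n\rtimes S_n$, in which the base group $\Aut(H)^n$ acts on $H^n$ coordinatewise and $S_n$ permutes the coordinates. This action extends $k$-linearly to $k[H^n]=k[H]^{\otimes n}$: the element $(\phi_1,\dots,\phi_n)$ acts as $\phi_1\otimes\cdots\otimes\phi_n$, and $\sigma\in S_n$ sends $v_1\otimes\cdots\otimes v_n$ to $v_{\sigma^{-1}(1)}\otimes\cdots\otimes v_{\sigma^{-1}(n)}$.

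For the base group, I would invoke the hypothesis that each $I_\alpha$ is $\Aut(H)$-invariant: this gives $\phi_i(I_{\alpha_i})=I_{\alpha_i}$ for all $i$, so $\phi_1\otimes\cdots\otimes\phi_n$ fixes each generating subspace $I_{\alpha_1}\otimes\cdots\otimes I_{\alpha_n}$ setwise. For $\sigma\in S_n$, the subspace $I_{\alpha_1}\otimes\cdots\otimes I_{\alpha_n}$ is carried to $I_{\alpha_{\sigma^{-1}(1)}}\otimes\cdots\otimes I_{\alpha_{\sigma^{-1}(n)}}$, which is exactly the generating subspace attached to the permuted tuple $(\alpha_{\sigma^{-1}(1)},\dots,\alpha_{\sigma^{-1}(n)})$; since a permutation does not change the sum of the entries, this tuple still satisfies the inequality of Definition \ref{jjj}, so the image is again one of the subspaces spanning $J$. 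Thus each generator maps the spanning family of $J$ into itself, hence maps $J$ into $J$, and being invertible maps $J$ onto $J$. Since $\Aut(H)^n$ and $S_n$ generate $\Aut(H)\wr S_n$, this proves that $J$ is invariant under the full wreath product.

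I do not expect a genuine obstacle here; the computation is essentially formal. The one point that deserves attention is the $S_n$-part, where it is crucial that the defining condition $\sum_{i=1}^n\alpha_i>n/3$ is symmetric under permutations of the indices — this is precisely why no information about the individual exponents $\alpha_i$, beyond their sum, is needed. I would also take a moment to confirm that the action of $\Aut(H)\wr S_n$ on $H^n$ by group automorphisms does extend to the algebra action on $k[H^n]=k[H]^{\otimes n}$ described above, since everything rests on that identification.
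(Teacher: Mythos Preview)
Your proposal is correct and follows essentially the same approach as the paper: check invariance on the generating subspaces $I_{\alpha_1}\otimes\cdots\otimes I_{\alpha_n}$ separately for the $\Aut(H)^n$ factor (where the hypothesis on each $I_\alpha$ gives setwise invariance) and for the $S_n$ factor (where the permutation-invariance of the condition $\sum_i\alpha_i>n/3$ is the key point), then conclude by generation. Your write-up is in fact a bit more careful about the conventions for the $S_n$-action and the passage from ``preserves generators'' to ``preserves $J$'', but the argument is the same.
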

 
 \begin{proof} It is clear that $J$ is invariant under the action of $\Aut(H)$ on each factor because each $I_\alpha$ is invariant under the action of $\Aut(H)$. An element $\sigma$ of $S_n$ sends  $I_{\alpha_1} \otimes I_{\alpha_2} \otimes \dots \otimes I_{\alpha_n}$ to $I_{\alpha_{\sigma(1)}} \otimes I_{\alpha_{\sigma{2}}} \otimes \dots \otimes I_{\alpha_{\sigma(n)}}$ and so preserves $J$ as well.
 
 Hence $J$ is invariant under the wreath product, which is generated by these two subgroups.
 
 \end{proof}

   \begin{lemma}\label{dimensionformula} We have \[|H|^n - \dim J \leq  \left(  \inf_{\lambda \in (0,1)}\frac{|H| + \int_0^1 \dim (I_\alpha) \lambda^\alpha \log \lambda d\alpha }{\lambda^{1/3} } \right)^n,\] \end{lemma}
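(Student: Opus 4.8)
The plan is to bound $\dim J$ from below by exhibiting inside $J$ a large subspace whose dimension we can compute, then optimize. Fix $\lambda \in (0,1)$. The key idea is that $J$ contains $I_{\alpha_1}\otimes \cdots \otimes I_{\alpha_n}$ whenever $\sum \alpha_i > n/3$, so the complement $k[H^n]/J$ should be controlled by tuples with $\sum \alpha_i \le n/3$. To make this quantitative I would introduce, for each factor, a "weight filtration" refinement: choose a basis $e_1^{(j)},\dots,e_{|H|}^{(j)}$ of $k[H]$ adapted to the filtration $I_\alpha$, so that each basis vector $e_i$ has a well-defined level $\ell_i = \sup\{\alpha : e_i \in I_\alpha\} \in [0,1]$, and the number of basis vectors with level $\ge \alpha$ equals $\dim I_\alpha$. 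Then $\bigotimes_j k[H]$ has a basis indexed by $n$-tuples of indices, and a basis vector $e_{i_1}\otimes\cdots\otimes e_{i_n}$ lies in $I_{\ell_{i_1}}\otimes\cdots\otimes I_{\ell_{i_n}}$, hence in $J$ as soon as $\sum_j \ell_{i_j} > n/3$. Therefore $\dim(k[H^n]/J)$ is at most the number of basis $n$-tuples with $\sum_j \ell_{i_j} \le n/3$.

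The next step is a standard Chernoff/Markov-type estimate to count those tuples. For any $\lambda \in (0,1)$ we have, for each such tuple, $\prod_j \lambda^{\ell_{i_j}} \ge \lambda^{n/3}$, so the number of tuples with $\sum \ell_{i_j} \le n/3$ is at most
\[
\lambda^{-n/3} \sum_{\text{all tuples}} \prod_{j=1}^n \lambda^{\ell_{i_j}} = \lambda^{-n/3}\left( \sum_{i=1}^{|H|} \lambda^{\ell_i} \right)^n = \left( \frac{\sum_{i=1}^{|H|} \lambda^{\ell_i}}{\lambda^{1/3}} \right)^n.
\]
It remains to recognize $\sum_{i=1}^{|H|}\lambda^{\ell_i}$ as the quantity $|H| + \int_0^1 \dim(I_\alpha)\lambda^\alpha \log\lambda\, d\alpha$ appearing in the statement. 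This is an Abel summation / integration-by-parts identity: writing $N(\alpha) = \#\{i : \ell_i \ge \alpha\} = \dim I_\alpha$ (using $I_0 = k[H]$ and the intersection hypothesis to handle boundary/continuity), we get $\sum_i \lambda^{\ell_i} = \int_0^1 \lambda^\alpha\, d(-N(\alpha)) + \lambda^0 N(0) \cdot(\text{boundary terms})$, and integrating by parts, $\int_0^1 \lambda^\alpha\,d(-N(\alpha)) = -\lambda^1 N(1) + \lambda^0 N(0) + \int_0^1 N(\alpha)\lambda^\alpha \log\lambda\, d\alpha$; collecting terms and using $N(0) = |H|$ gives exactly $|H| + \int_0^1 \dim(I_\alpha)\lambda^\alpha\log\lambda\, d\alpha$ up to the $N(1)\lambda$ term, which only helps the bound (it is subtracted), so we may discard it. Taking the infimum over $\lambda \in (0,1)$ then yields the claimed inequality.

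The main obstacle I anticipate is purely technical: justifying the integration-by-parts identity when the filtration $\{I_\alpha\}$ jumps (i.e. $N(\alpha)$ is a right-continuous step function rather than smooth), and being careful about whether the jumps contribute atoms to the Stieltjes integral in a way consistent with the stated formula. The hypothesis $\bigcap_{\beta < \alpha} I_\beta = I_\alpha$ is precisely what pins down the correct one-sided continuity of $\alpha \mapsto \dim I_\alpha$, so the identity should hold as stated, but one must check the endpoint contributions at $\alpha = 0$ and $\alpha = 1$ match. A secondary point worth a sentence is that the choice of basis adapted to the filtration exists — one builds it by refining bases through the (finitely many) distinct ideals appearing in the filtration, which is possible because there are only finitely many values of $\dim I_\alpha$.
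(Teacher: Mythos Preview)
Your approach is essentially identical to the paper's: adapted basis with levels $\ell_i$, the observation that tensor basis vectors with $\sum \ell_{i_j}>n/3$ lie in $J$, and the Chernoff/Markov trick to bound the number of remaining tuples. The one place where you diverge is the final identification of $\sum_i \lambda^{\ell_i}$. The paper avoids Stieltjes integration entirely by writing $\lambda^{\ell_i}=1+\int_0^{\ell_i}\lambda^\alpha\log\lambda\,d\alpha$ and applying Fubini, which yields the \emph{exact} identity $\sum_i \lambda^{\ell_i}=|H|+\int_0^1 \dim(I_\alpha)\,\lambda^\alpha\log\lambda\,d\alpha$ with no boundary terms to discard; your remark that an $N(1)\lambda$ term must be thrown away is a miscount in the Stieltjes bookkeeping (those basis vectors with $\ell_i=1$ are already accounted for on both sides), so the inequality you state is in fact an equality.
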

 
 \begin{proof} It suffices to prove for each fixed $\lambda, 0<\lambda<1$ that 
 
 \[|H|^n - \dim J \leq  \left( \frac{|H| + \int_0^1 \dim (I_\alpha) \lambda^\alpha \log \lambda d\alpha }{\lambda^{1/3} } \right)^n.\] 
 
 Let $m=|H|$. 
 
 We can choose a basis $e_1,\dots,e_m$ of $k[H]$ such that each $I_\alpha$ is generated by the first $\dim I_{\alpha}$ basis vectors. (Because $I_\beta \subseteq I_\alpha$ when $\beta>\alpha$ there can be only finitely distinct $I_\alpha$, all totally ordered by inclusion. Choose a basis of the smallest, then extend it to a basis of the next smallest, and so on.)
 
 For $j$ from $1$ to $m$, let $\alpha_j$ be the largest $\alpha$ such that  $e_j \in I_{\alpha_j}$. There exists such an $\alpha$ by the assumption $I_0=k[H]$, and the supremum is obtained by the assumption that $I_{\alpha} = \bigcap_{\beta<\alpha} I_\beta$.
 
 Any product of basis vectors $e_{j_1} \otimes \dots \otimes e_{j_n}$ such that $\sum_{i=1}^n \alpha_{j_m} > n/3$ is in $J$.
 
 So the set of all products of basis vectors $e_{j_1} \otimes \dots \otimes e_{j_n}$ such that $\sum_{i=1}^n \alpha_{j_m} \leq n/3$ generates $k[H^n]/J$. Therefore $|H|^n - \dim J$ is at most the number of such products of basis vectors.
 
 The number of such products is \[\sum_{j_1,\dots,j_n \in \{1,\dots m\} | \sum_{i=1}^n \alpha_{j_i} \leq n/3} 1.\] We have
 
 \[ \sum_{j_1,\dots,j_n \in \{1,\dots m\} | \sum_{i=1}^n \alpha_{j_i} \leq n/3} 1 \leq \sum_{j_1,\dots,j_n \in \{1,\dots m\} | \sum_{i=1}^n \alpha_{j_i} \leq n/3} \lambda^{\sum_{i=1}^n \alpha_{j_i} - n/3}  \leq \sum_{j_1,\dots,j_n \in \{1,\dots m\} } \lambda^{\sum_{i=1}^n \alpha_{j_i} - n/3} \]
 
 \[\leq \left(\sum_{j=1}^m  \lambda^{\alpha_j - 1/3} \right)^n.\]

 Finally
 
 \[ \sum_{j=1}^m \lambda^{\alpha_j} =\sum_{j=1}^m \left(1 + \int_0^{\alpha_j} \lambda^\alpha \log \lambda d \alpha\right)= m + \int_0^1 \left|\{j | \alpha_j \geq \alpha_m\}\right| \lambda^\alpha \log \lambda d\alpha  = m + \int_0^1 \dim (I_\alpha)  \lambda^\alpha \log \lambda d\alpha\]

and we obtain the stated formula.

 \end{proof}

 In practice, it will be convenient to give a slightly different formula for $|H| + \int_0^1 \dim (I_\alpha) \lambda^\alpha \log \lambda d\alpha $, which requires additional notation:
 
 \begin{lemma}\label{alternateformulation} Let $J_1, \dots, J_k$ be two-sided ideals and let $\alpha_0,\dots , \alpha_{k}$ be real numbers  such that if $j > i$ then $J_j \subseteq J_i$ and $\alpha_j > \alpha_i$. Also suppose that $\alpha_0 =0$ and $\alpha_k=1$.  Let $I_0= k[G]$ and $I_\alpha = J_{\min \{i | \alpha_i \geq \alpha\} }$ for $\alpha\in (0,1]$. Then
 
 \[ |H| + \int_0^1 \dim (I_\alpha) \lambda^\alpha \log \lambda d\alpha = \dim (k[H] / J_1) + \sum_{i=1}^{k-1} \dim (J_i / J_{i+1} ) \lambda^{\alpha_i} + \dim (J_k) \lambda \] 
 
 \end{lemma}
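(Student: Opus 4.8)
The plan is a direct evaluation of the integral, using the fact that $\alpha \mapsto \dim I_\alpha$ is a step function and that $\lambda^\alpha$ is an antiderivative of $\lambda^\alpha \log \lambda$.

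First I would identify where $I_\alpha$ is constant. For $\alpha \in (\alpha_{i-1},\alpha_i]$ the index $\min\{j \mid \alpha_j \geq \alpha\}$ equals $i$: since $\alpha > \alpha_{i-1}$ no index $j \leq i-1$ works, while $\alpha_i \geq \alpha$ shows $j=i$ does. Hence $I_\alpha = J_i$ on each interval $(\alpha_{i-1},\alpha_i]$, and since these intervals partition $(0,1]$ (up to the single point $0$, which has measure zero), we get
\[ \int_0^1 \dim(I_\alpha)\,\lambda^\alpha \log\lambda\, d\alpha = \sum_{i=1}^k \dim(J_i) \int_{\alpha_{i-1}}^{\alpha_i} \lambda^\alpha \log\lambda\, d\alpha = \sum_{i=1}^k \dim(J_i)\bigl(\lambda^{\alpha_i} - \lambda^{\alpha_{i-1}}\bigr), \]
using $\frac{d}{d\alpha}\lambda^\alpha = \lambda^\alpha \log \lambda$ to evaluate each inner integral.

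Next I would rearrange the right-hand side by an Abel-summation/telescoping step: splitting the sum, reindexing the $\lambda^{\alpha_{i-1}}$ terms by $i \mapsto i+1$, and collecting the coefficient of each $\lambda^{\alpha_i}$ gives
\[ \sum_{i=1}^k \dim(J_i)\bigl(\lambda^{\alpha_i} - \lambda^{\alpha_{i-1}}\bigr) = \dim(J_k)\lambda^{\alpha_k} - \dim(J_1)\lambda^{\alpha_0} + \sum_{i=1}^{k-1}\bigl(\dim J_i - \dim J_{i+1}\bigr)\lambda^{\alpha_i}. \]
Then I would substitute $\alpha_0 = 0$ and $\alpha_k = 1$, use $J_{i+1}\subseteq J_i$ to write $\dim J_i - \dim J_{i+1} = \dim(J_i/J_{i+1})$, add $|H| = \dim k[H]$ to both sides, and finally use $\dim k[H] - \dim J_1 = \dim(k[H]/J_1)$. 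This yields exactly
\[ |H| + \int_0^1 \dim (I_\alpha)\, \lambda^\alpha \log \lambda\, d\alpha = \dim (k[H] / J_1) + \sum_{i=1}^{k-1} \dim (J_i / J_{i+1} ) \lambda^{\alpha_i} + \dim (J_k) \lambda, \]
as claimed.

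There is no substantial obstacle here; the proof is entirely bookkeeping. The only two points needing care are correctly pinning down the half-open intervals $(\alpha_{i-1},\alpha_i]$ on which the step function $\alpha\mapsto I_\alpha$ is constant (the behavior at the measure-zero endpoints being irrelevant to the integral), and keeping track of the index shift in the telescoping sum so that the boundary contributions at $i=0$ and $i=k$, which become the $\dim(k[H]/J_1)$ and $\dim(J_k)\lambda$ terms, are assembled correctly.
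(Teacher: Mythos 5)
Your proof is correct and is essentially the same computation as the paper's: both rest on the antiderivative identity $\frac{d}{d\alpha}\lambda^\alpha = \lambda^\alpha\log\lambda$ and an Abel-summation step, the only difference being that you integrate the step function $\dim I_\alpha$ piecewise over the intervals $(\alpha_{i-1},\alpha_i]$ and then telescope, whereas the paper first decomposes $\dim I_\alpha = \sum_{i=1}^{k-1} 1_{\alpha_i\geq\alpha}\dim(J_i/J_{i+1}) + \dim(J_k)$ and then integrates term by term. Your identification of the constancy intervals and the handling of the measure-zero endpoint at $\alpha=0$ are both correct.
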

 
 \begin{remark}This formula is closely related to the rate function of \cite[Equation (2)]{EG} and to the entropy minimization of \cite[Proposition 6]{ST}.\end{remark}

 \begin{proof} \[ H = \dim (k[H] / J_1) + \sum_{i=1}^{k-1} \dim (J_i / J_{i+1} ) + \dim (J_k)  \] and

 \[\dim I_{\alpha} = \sum_{i=1}^{k-1}  1_{\alpha_i \geq \alpha} \dim (J_i / J_{i+1} )  + \dim (J_k)\]
 
 so
 
 \[   |H| + \int_0^1 \dim (I_\alpha) \lambda^\alpha \log \lambda d\alpha\]
 
 \[ = \dim (k[H] / J_1)  + \sum_{i=1}^{k-1}\dim(J_i/J_{i+1})  \left( 1+ \int_0^1 1_{\alpha_i \geq \alpha} \lambda^\alpha \log \lambda d\alpha \right) + \dim (J_k)\left(1+ \int_0^1 \lambda^\alpha \log \lambda d \alpha\right)\]
 
 \[=\dim (k[H] / J_1)  + \sum_{i=1}^{k-1}\dim(J_i/J_{i+1})  \left( 1+ \int_0^{\alpha_i}  \lambda^\alpha \log \lambda d\alpha \right) + \dim (J_k)\left(1+ \int_0^1 \lambda^\alpha \log \lambda d \alpha\right)\]

  \[=\dim (k[H] / J_1)  + \sum_{i=1}^{k-1}\dim(J_i/J_{i+1})  \left( 1+ \int_0^{\alpha_i} \frac{d \lambda^\alpha}{d \alpha} d\alpha \right) + \dim (J_k)\left(1+ \int_0^1  \frac{d \lambda^\alpha}{d \alpha} d \alpha\right)\]

  \[=\dim (k[H] / J_1)  + \sum_{k=1}^{a-1}\dim(J_i/J_{i+1}) \lambda^{\alpha_i} + \dim (J_k)\lambda.\]

 \end{proof}

\begin{theo}\label{main}  Let $H$ be a finite group. Let $k$ be a field and $I_\alpha$ a filtration of $k[H]$ into two-sided ideals, indexed by $[0,1]$, such that $I_0=k[H]$, $I_\alpha =\bigcap_{\beta<\alpha} I_\beta$, and $I_\alpha I_\beta I_\gamma =0$ whenever $\alpha+\beta+\gamma>1$.

\begin{enumerate}

\item The cardinality of a multiplicative matching in $H^n$ is at most \[  \left(  \inf_{\lambda \in (0,1)}\frac{|H| + \int_0^1 \dim (I_\alpha) \lambda^\alpha \log \lambda d\alpha }{\lambda^{1/3} } \right)^n\]

\item If we suppose furthermore that $H$ is an indecomposable group with trivial center and, for each $\alpha$, $I_\alpha$ is invariant under automorphisms of $H$, then for $G$ a finite group such that $H^n$ is a normal subgroup of $G$, the cardinality of a multiplicative matching in $G$ is at most 

\[ |G|  \left(  \inf_{\lambda \in (0,1)}\frac{1+ \int_0^1 \frac{\dim (I_\alpha)}{|H|} \lambda^\alpha \log \lambda d\alpha }{\lambda^{1/3} } \right)^n\]

\end{enumerate}

\end{theo}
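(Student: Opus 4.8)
The plan is to feed the ideal $J$ of Definition~\ref{jjj} through the lemmas already in place. For part (1), apply Lemma~\ref{slicing} to the group $H^n$ with the subspace $K = J$: this is legitimate since $J^3 = 0$ by Lemma~\ref{cubesnought2}, so the slice rank of $1_{xyz=1}$ on $H^n \times H^n \times H^n$ is at most $3(|H|^n - \dim J)$. Lemma~\ref{dimensionformula} bounds $|H|^n - \dim J$ by $\big(\inf_{\lambda\in(0,1)} \frac{|H| + \int_0^1 \dim(I_\alpha)\lambda^\alpha\log\lambda\,d\alpha}{\lambda^{1/3}}\big)^n$, and Lemma~\ref{tricolored} says the cardinality of a multiplicative matching in $H^n$ is at most that slice rank. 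So we obtain the claimed bound up to a factor of $3$, which is removed by the same amplification as in Theorem~\ref{set}: a multiplicative matching of size $s$ in $H^n$ yields one of size $s^m$ in $H^{nm} = (H^n)^m$, whence $s^m \le 3 B^{nm}$ with $B$ the infimum above, so $s \le 3^{1/m} B^n \to B^n$.

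For part (2), the new ingredient is to manufacture a two-sided ideal of $k[G]$ out of $J$ using Lemma~\ref{jumping} with normal subgroup $H^n$. That lemma requires $J$ to be invariant under the conjugation action of $G$ on $H^n$. Here I would use the classical structure theorem that for $H$ directly indecomposable (not a nontrivial direct product) with trivial center, $\Aut(H^n) \cong \Aut(H) \wr S_n$: by the Krull--Schmidt theorem the decomposition of $H^n$ into the $n$ obvious copies of $H$ is unique as a set of subgroups, so every automorphism permutes these copies and acts on each by an automorphism of $H$. Hence the homomorphism $G \to \Aut(H^n)$ afforded by conjugation has image inside $\Aut(H)\wr S_n$, and since each $I_\alpha$ is $\Aut(H)$-invariant, Lemma~\ref{invariant} gives that $J$ is $\Aut(H)\wr S_n$-invariant, therefore invariant under conjugation by $G$. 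Lemma~\ref{jumping} then produces a two-sided ideal $K \subseteq k[G]$ with $K^3 = 0$ and $\dim K \ge \frac{|G|}{|H|^n}\dim J$.

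Now run Lemma~\ref{slicing} for the group $G$ with this $K$: the slice rank of $1_{xyz=1}$ on $G \times G \times G$ is at most $3(|G| - \dim K) \le \frac{3|G|}{|H|^n}(|H|^n - \dim J)$. Substituting the bound from Lemma~\ref{dimensionformula} and pulling the $|H|^n$ inside the $n$-th power rewrites this as $3|G|\big(\inf_{\lambda\in(0,1)} \frac{1 + \int_0^1 (\dim(I_\alpha)/|H|)\lambda^\alpha\log\lambda\,d\alpha}{\lambda^{1/3}}\big)^n$, and Lemma~\ref{tricolored} transfers this bound to the cardinality of a multiplicative matching in $G$. The residual factor of $3$ is again killed by amplification: $G^m$ contains $H^{nm} = (H^n)^m$ as a normal subgroup, $H$ is still indecomposable and centerless and the same filtration still works, so part (2) applies to $G^m$ with $nm$ in place of $n$, giving $s^m \le 3|G|^m B'^{nm}$ for a multiplicative matching of size $s$ in $G$ (where $B'$ is the new infimum), hence $s \le 3^{1/m}|G| B'^n \to |G| B'^n$.

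The step I expect to be the crux is the appeal to $\Aut(H^n) \cong \Aut(H)\wr S_n$ and the verification that ``indecomposable'' and ``trivial center'' are exactly the hypotheses that license it; everything else is assembling the earlier lemmas and running the amplification trick. The finitely-many-values subtlety for the $I_\alpha$ and the construction of the compatible basis are already dealt with inside Lemma~\ref{dimensionformula}, so they need not be revisited here.
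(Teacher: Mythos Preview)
Your proposal is correct and follows essentially the same route as the paper's proof: assemble $J$ via Definition~\ref{jjj}, invoke Lemmas~\ref{cubesnought2}, \ref{slicing}, and \ref{dimensionformula} for part (1), and for part (2) add Lemma~\ref{invariant} together with $\Aut(H^n)\cong \Aut(H)\wr S_n$ (the paper cites Bidwell for this, you invoke Krull--Schmidt) to feed into Lemma~\ref{jumping}, then amplify away the factor $3$ exactly as you describe. The only cosmetic difference is that the paper inlines the slice-rank-to-matching step rather than citing Lemma~\ref{tricolored}, and your inequality $\dim K \ge \frac{|G|}{|H|^n}\dim J$ is in fact slightly more precise than the equality the paper writes.
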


\begin{proof} First we prove part (1). Take a multiplicative matching $ S \times T \times U$ in $H^n$. Restricted to $S \times T \times U$, the function $1_{xyz=1}$ becomes the characteristic function of the perfect matching $M$, whose slice rank is the cardinality $|S $. So the cardinality of the multiplicative matching is at most the slice rank of the function $1_{xyz=1}$ on $(H^n)^3$ over the field $k$.

Let $J$ be the two-sided ideal in $k[H^n]$ defined in Definition \ref{jjj}. By Lemma \ref{cubesnought2}, $J^3=0$. Hence by Lemma \ref{slicing} and Lemma \ref{dimensionformula}, the slice rank of the function $1_{xyz=1}$ on $(H^n)^3$ over the field $k$  is at most \[3 (|H|^n- \dim J) \leq 3  \left(  \inf_{\lambda \in (0,1)}\frac{|H| - \int_0^1 \dim (I_\alpha) \lambda^\alpha \log \lambda d\alpha }{\lambda^{1/3} } \right)^n.\]

We can remove the factor of $3$ by a standard amplification argument - a multiplicative matching in $|H|^n$ of cardinality $|S|$ gives a multiplicative matching in $|H|^{kn}$ of cardinality $|S|^k$, letting us improve the $3$ to $3^{1/k}$, which tends to $1$ in the limit as $k$ goes to $\infty$.

Next we prove part (2). Assume that $H$ is an indecomposable group with trivial center, $I_\alpha$ is $\Aut(H)$-invariant, and $H^n$ is a normal subgroup of $G$. Take a multiplicative matching $S \times T \times U$ in $G$. By the same argument, the cardinality of the multiplicative matching is at most the slice rank of the function $1_{xyz=1}$ on $G^3$ over $k$.

Let $J$ be the same two-sided ideal as before. By Lemma \ref{invariant}, $J$ is invariant under $\Aut(H) \wr S_n$. Because $H$ is indecomposable with trivial center, $\Aut(H^n) =\Aut(H) \wr S_n$ \cite[Theorem 3.1]{Bidwell}. So $J$ is invariant under the conjugation action of $G$.

Hence by Lemma \ref{jumping} there is a two-sided ideal $K$ of $k[G]$ with $\dim K= \frac{|G|}{|H|^n} \dim J$. Hence by Lemma \ref{slicing} and Lemma \ref{dimensionformula} the slice rank of $1_{xyz=1}$ is at most

\[3 \left(|G| - \frac{|G|}{|H|^n} \dim J\right)= 3 \frac{|G|}{|H|^n}  \left( |H|^n - \dim J\right) \]

\[\leq 3 \frac{|G|}{|H|^n} \ \left(  \inf_{\lambda \in (0,1)}\frac{|H| + \int_0^1 \dim (I_\alpha) \lambda^\alpha \log \lambda d\alpha }{\lambda^{1/3} } \right)^n= 3 |G|\left(  \inf_{\lambda \in (0,1)}\frac{1- \int_0^1 \frac{\dim (I_\alpha)}{|H|} \lambda^\alpha \log \lambda d\alpha }{\lambda^{1/3} } \right)^n .\] 

So the cardinality of a multiplicative matching is at most $3 |G|  \left(  \inf_{\lambda \in (0,1)}\frac{1+ \int_0^1 \frac{\dim (I_\alpha)}{|H|} \lambda^\alpha \log \lambda d\alpha }{\lambda^{1/3} } \right)^n$. We remove the factor of $3$ with the same amplification argument as in part (1) except that we pass to $G^k$, which admits $H^{nk}$ as a normal subgroup, to improve the bound.

\end{proof}

Let us consider a special case:

Let $I$ be a two-sided ideal of $k[H]$ that satisfies $I^2=0$.  Let $I^\perp$ be the ideal of elements $x \in k[H]$ such that $xI=0$ and $Ix=0$.

 \begin{lemma}\label{duality} $\dim I^\perp = |H| -\dim I $ \end{lemma}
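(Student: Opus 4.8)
The plan is to exhibit $I^\perp$ as the annihilator (in the linear-algebra sense) of a pairing and then invoke nondegeneracy of the standard bilinear form on the group algebra. Concretely, equip $k[H]$ with the symmetric nondegenerate bilinear form $\langle \sum_g a_g g, \sum_g b_g g\rangle = \sum_g a_g b_{g^{-1}}$, which is the form for which the basis $\{g\}$ is dual to $\{g^{-1}\}$; equivalently $\langle x, y\rangle$ is the coefficient of $1$ in $xy$. The key compatibility is associativity: $\langle xz, y\rangle = \langle x, zy\rangle$ for all $x,y,z$, since both equal the coefficient of $1$ in $xzy$. First I would use this to show that $I^\perp$ is exactly the orthogonal complement $I^{\perp_{\langle\cdot,\cdot\rangle}}$ of the subspace $I$ under this form. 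For one inclusion, if $x \in I^\perp$ then $xI = Ix = 0$, so in particular $\langle x, z\rangle = 0$ for every $z \in I$ (take the coefficient of $1$ in $xz$), giving $x \perp I$.

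For the reverse inclusion I would exploit that $I$ is a two-sided ideal. Suppose $\langle x, z\rangle = 0$ for all $z \in I$. To see $xI = 0$, note that for any $w \in k[H]$ and $z \in I$ we have $wz \in I$ (left ideal), so $\langle xz, w\rangle = \langle x, zw \rangle$; hmm — I need to be slightly careful about sides. Let me instead argue: for $z \in I$ and arbitrary $w \in k[H]$, $\langle xz, w \rangle = \langle x, zw\rangle$ and $zw \in I$ since $I$ is a right ideal, so $\langle x, zw\rangle = 0$; as $w$ ranges over all of $k[H]$ and the form is nondegenerate, $xz = 0$. Hence $xI = 0$. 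Symmetrically, $\langle zx, w\rangle = \langle z, xw\rangle$; this time I rewrite $\langle zx,w\rangle = \langle x, wz\rangle$ using associativity differently — $\langle zx, w\rangle$ is the coefficient of $1$ in $zxw$, which equals $\langle x, wz\rangle$ — and $wz \in I$ since $I$ is a left ideal, so this vanishes for all $w$, giving $Ix = 0$. Thus $x \in I^\perp$, and $I^\perp = I^{\perp_{\langle\cdot,\cdot\rangle}}$.

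Then the dimension formula is immediate: for any subspace $V$ of a finite-dimensional space carrying a nondegenerate bilinear form, $\dim V^{\perp} = \dim k[H] - \dim V = |H| - \dim V$, so $\dim I^\perp = |H| - \dim I$. Note the hypothesis $I^2 = 0$ is not actually needed for this lemma — only that $I$ is a two-sided ideal — though it is presumably in force because of the surrounding discussion.

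The only mild subtlety, and the step I would be most careful about, is the bookkeeping with left versus right multiplication in the associativity identity: one must check that $I$ being a two-sided ideal is exactly what lets both the conditions $xI = 0$ and $Ix = 0$ be read off from a single orthogonality condition $x \perp I$. There is no real obstacle here, just the need to track which of $zw$, $wz$ lands back in $I$ in each case; if instead $I$ were only a one-sided ideal the two conditions would genuinely differ and the clean duality would fail. Everything else is the standard fact that orthogonal complements with respect to a nondegenerate form have complementary dimension.
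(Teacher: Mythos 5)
Your proposal is correct and follows essentially the same route as the paper: both identify $I^\perp$ with the orthogonal complement of $I$ under the nondegenerate trace pairing $\langle x,y\rangle = $ coefficient of $1$ in $xy$, using the two-sidedness of $I$ (and the associativity/cyclic symmetry of the pairing) to recover both $xI=0$ and $Ix=0$ from a single orthogonality condition. Your side remark that $I^2=0$ is not needed here is also accurate.
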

 
 \begin{proof}Consider the bilinear form on $k[H]$ that sends two elements to the coefficient of $1$ in their product. From the definition of  multiplication in the group algebra, this is a perfect pairing: $g$ pairs nontrivially with $g^{-1}$ and no other element. So the perpendicular subspace of $I$ under this pairing has dimension $|H| -\dim I$. It is sufficient to show that this is $I^\perp$.
 
 Given an element $x \in k[G]$ whose pairing with $I$ vanishes, and $y\in I$, the coefficient of $1$ in $xy$ vanishes. For any $y \in I$, the coefficient of $g$ in $xy$ is the coefficient of $1$ in $xyg^{-1}$ and vanishes because $yg^{-1}\in I$. So $xy=0$ for $y \in I$.
 
 Similarly, with the same $x$ and $y$, the coefficient of $g$ in $yx$ is the coefficient of $1$ in $g^{-1}yx$. By examining the definition of multiplication in the group algebra we can see that the pairing is symmetric, so because $g^{-1}y \in I$, this coefficient vanishes.
 
 \end{proof}

\begin{theo}\label{main2} Let $H$ be an indecomposable group with trivial center. Let $G$ be a group containing $H^n$ as a normal subgroup. Then the cardinality of a multiplicative matching in $G$ is at most 

\[|G| \left(1 - \left( \frac{2- \frac{3}{2^{2/3}} }{|H|}\right) \right)^n .\]

Furthermore, if over some field $k$ there exists an ideal $I$ in the group algebra $k[H]$ satisfying $I^2=0$ and which is invariant under the group of automorphisms of $H$, then the cardinality of a multiplicative matching in $G$ is at most 

\[|G| \left(1 - \left( \frac{2- \frac{3}{2^{2/3}}}{|H|}\right)  \dim (I) \right)^n .\]

\end{theo}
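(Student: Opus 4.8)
The plan is to recognize the statement as the special case of Theorem~\ref{main}(2) coming from a concrete three-step filtration built out of $I$, so the work is: produce that filtration, verify the hypotheses of Theorem~\ref{main}(2), and then evaluate the resulting bound by a one-variable optimization.

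First I would reduce the unconditional first bound to the conditional second one. Since $H$ is nontrivial there is a prime $p$ dividing $|H|$; working over $k=\mathbb F_p$, the line $I = k\cdot\sum_{g\in H}g$ is a two-sided ideal (the element $\sum_{g\in H}g$ is central), it has $\dim_k I = 1$, it satisfies $I^2=0$ because $\bigl(\sum_{g\in H}g\bigr)^2 = |H|\sum_{g\in H}g = 0$ in characteristic $p$, and it is $\Aut(H)$-invariant because automorphisms of $H$ permute the elements of $H$. Substituting $\dim I = 1$ into the ``furthermore'' bound yields the first bound, so it suffices to prove the ``furthermore'' statement. Now fix a two-sided $\Aut(H)$-invariant ideal $I$ with $I^2=0$, and let $I^\perp$ be the ideal defined before Lemma~\ref{duality}. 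Then $I\subseteq I^\perp$ (because $I\cdot I=0$), $I^\perp$ is a two-sided ideal, and $I^\perp$ is $\Aut(H)$-invariant: if $\phi\in\Aut(H)$ and $xI=Ix=0$, then $\phi(x)I=\phi(x)\phi(I)=\phi(xI)=0$ and likewise $I\phi(x)=0$. Define the filtration by $I_0=k[H]$, $I_\alpha = I^\perp$ for $\alpha\in(0,1/3]$, and $I_\alpha = I$ for $\alpha\in(1/3,1]$; this is exactly the filtration associated as in Lemma~\ref{alternateformulation} to the chain $J_1=I^\perp\supseteq J_2=I$ with breakpoints $\alpha_0=0<\alpha_1=1/3<\alpha_2=1$, so $I_0=k[H]$, each $I_\alpha$ is a two-sided $\Aut(H)$-invariant ideal, and $I_\alpha=\bigcap_{\beta<\alpha}I_\beta$ hold immediately from the ordering.

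The main point is to check the vanishing condition $I_\alpha I_\beta I_\gamma = 0$ whenever $\alpha+\beta+\gamma>1$; this is where the breakpoint $1/3$ and the identities $I^2=0$ and $I\cdot I^\perp = I^\perp\cdot I = 0$ are all used. I would argue by cases on how many of $\alpha,\beta,\gamma$ exceed $1/3$. If $\alpha+\beta+\gamma>1$, at least one does. If exactly one does, then one of the three factors equals $I$ and the two indices of the other factors are each $\le 1/3$; if either of those other factors is $I^\perp$ the product vanishes by $I\cdot I^\perp = I^\perp\cdot I = 0$, while if both are $k[H]$ then those two indices are $0$ and the sum is $\le 1$, a contradiction. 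If two or three of the indices exceed $1/3$, then at least two factors equal $I$; using $I\cdot k[H]=k[H]\cdot I = I$ to absorb a middle $k[H]$, and $I\cdot I^\perp = I^\perp\cdot I = 0$ to kill a middle $I^\perp$, every arrangement of two $I$'s reduces to a product containing $I\cdot I$, which is $0$. The case in which no index exceeds $1/3$ cannot occur. I expect this case check to be the only genuinely delicate step.

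With the hypotheses of Theorem~\ref{main}(2) verified, that theorem bounds a multiplicative matching in $G$ by $|G|\bigl(\inf_{\lambda\in(0,1)}\lambda^{-1/3}\bigl(1+\int_0^1\frac{\dim I_\alpha}{|H|}\lambda^\alpha\log\lambda\,d\alpha\bigr)\bigr)^n$. By Lemma~\ref{alternateformulation}, $|H|$ times the numerator equals $\dim(k[H]/I^\perp)+\dim(I^\perp/I)\lambda^{1/3}+\dim(I)\lambda$, and Lemma~\ref{duality} ($\dim I^\perp = |H|-\dim I$) rewrites this as $\dim I + (|H|-2\dim I)\lambda^{1/3}+(\dim I)\lambda$. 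Writing $d=\dim I/|H|$, the function to minimize over $\lambda\in(0,1)$ is $d\lambda^{-1/3}+(1-2d)+d\lambda^{2/3}$; its derivative vanishes at $\lambda=1/2$, where the value is $1-2d+d\bigl(2^{1/3}+2^{-2/3}\bigr) = 1-d\bigl(2-\frac{3}{2^{2/3}}\bigr)$, using $2^{1/3}+2^{-2/3} = \frac{3}{2^{2/3}}$. This gives the claimed bound $|G|\bigl(1-(2-\frac{3}{2^{2/3}})\frac{\dim I}{|H|}\bigr)^n$.
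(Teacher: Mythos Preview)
Your proof is correct and follows essentially the same route as the paper: reduce the first assertion to the second via the one-dimensional ideal $\mathbb F_p\cdot\sum_{h\in H}h$, build the three-step filtration $k[H]\supseteq I^\perp\supseteq I$ with breakpoint $1/3$, verify the hypotheses of Theorem~\ref{main}(2), and evaluate the resulting expression via Lemmas~\ref{alternateformulation} and~\ref{duality} at $\lambda=1/2$. The only differences are expository---you spell out the $\Aut(H)$-invariance of $I^\perp$ and the case analysis for $I_\alpha I_\beta I_\gamma=0$ more carefully, and you locate $\lambda=1/2$ by differentiation rather than by direct substitution---but the argument is the same.
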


\begin{proof}  The second statement implies the first, because for each prime $p$ dividing the cardinality of $H$, we may take $I$ to be the ideal generated by the element $\sum_{h \in H} h  \in \mathbb F_p[H]$, which is a one-dimensional ideal that squares to $0$.  So it is sufficient to prove only the second statement.

To do this, consider the filtration $I_\alpha$ on $k[H]$ given by $I_\alpha = k[H]$ if $\alpha=0$, $I_\alpha= I^\perp$ if $0<\alpha \leq 1/3$, and $I_\alpha= I$ if $1/3 < \alpha \leq 1$.  This is indeed a filtration because $I^2=0$ so $I \subseteq I^\perp$. 
We apply Theorem \ref{main}(2) to this filtration. 

First we check the conditions. By definition, $I_0=0$ and $I_\alpha= \bigcap_{\beta<\alpha} I_\beta$. Furthermore, if $\alpha+\beta+\gamma>1$ then one of the $\alpha,\beta,\gamma$ is  greater than $ 1/3$ and another is greater than $0$ so $I_\alpha I_\beta I_\gamma \subseteq I I^\perp =0$. Because $I$ is invariant under automorphisms of $H$, $I^\perp$ is as well, so every ideal in the filtration is automorphism-invariant. We have already assumed that $H$ is a nonabelian finite simple group and that $H^n$ is a normal subgroup of $G$. 

It remains to calculate \[ \inf_{\lambda, 0 < \lambda<1} \left(|H| + \int_0^1 \dim (I_\alpha) \lambda^\alpha \log \lambda d\alpha \right).\]

Lemma \ref{alternateformulation}, together with Lemma \ref{duality}, imply that \[|H| + \int_0^1 \dim (I_\alpha) \lambda^\alpha \log \lambda d\alpha =  \dim (I)  + (|H| - 2 \dim (I)) \lambda^{1/3} + \lambda \dim (I)\]\[ =\lambda^{1/3} H + (1- 2 \lambda^{1/3} + \lambda^{2/3} )\dim(I) \]
and
\[ \inf_{\lambda \in (0,1)} \left(\frac{|H| + \int_0^1 \dim (I_\alpha) \lambda^\alpha \log \lambda d\alpha }{\lambda^{1/3}}\right) = |H| +\inf_{\lambda \in (0,1)} (\lambda^{-1/3} - 2 + \lambda^{2/3}) \dim (I) \]
(taking $\lambda=1/2$)
\[ \leq |H| +(2^{1/3} - 2 + 2^{-2/3})\dim (I)  = |H| + \left( \frac{3}{2^{2/3}} - 2\right) \dim (I). \]
\end{proof}

A similar theorem (with a stronger bound) was previously proven by Chris Umans for abelian finite simple groups $\mathbb F_p$. For this reason, and because the bound of this theorem gets stronger the smaller the group is, it is primarily of interest for nonabelian finite simple groups.

\section{Matrix Groups}

There is no reason to believe that the bounds of Theorem \ref{main2} are sharp, or close to it. This suggests the problem of improving them by applying Theorem \ref{main} to a different filtration. In this section we do precisely this in the case $H=PSL_2(\mathbb F_p)$. 
 
Let $p>2$ be a prime. We will define a filtration on the group algebra $\overline{\mathbb F}_p[PSL_2(\mathbb F_p)]$ by first defining a filtration on the group algebra $\overline{\mathbb F}_p[GL_2(\mathbb F_p)]$. We will establish properties of this filtration using the classifications of the irreducible representations of $GL_2(\mathbb F_p)$ in characteristic $0$, the irreducible representations of $GL_2(\mathbb F_p)$ in characteristic $p$, and the reductions modulo $p$ of the characteristic $0$ representations. See \cite{Prasad} for an overview of these topics.

Recall that all irreducible mod $p$ representations of $GL_2(\mathbb F_p)$ have the form $(\det)^j \otimes \Sym^i (V)$, for $j$ from $0$ to $p-1$ and $i$ from $0$ to $p-1$, where $V$ is the standard representation of $GL_2(\mathbb F_p)$ \cite[Lemma 1.3]{Prasad} .  The dual of $(\det)^j \otimes \Sym^{i}(V)$ is $(\det)^{p-1-j} \otimes \Sym^{i}(V)$ so the set of irreducible representations involving a given symmetric power $i$ is closed under duality.

\begin{defi}Let $I_1$ be the ideal of $\overline{\mathbb F}_p[GL_2(\mathbb F_p)]$ consisting of all elements that act trivially on all irreducible mod $p$ representations of $GL_2(\mathbb F_p)$  except $(\det)^j \otimes \Sym^{p-1} (V)$. 

Let $I_2$ be the ideal consisting of elements that act trivially on all irreducible mod $p$ representations. 

Let $I_3$ be the ideal generated by all elements $\sum_g f(g) g$ where $f$ is a matrix coefficient of an irreducible mod $p$ representation other than $(\det)^j \otimes \Sym^{p-1} (V)$.\end{defi}

\begin{lemma}\label{filtration} We have the inclusions \[I_3 \subseteq I_2 \subseteq I_1.\]\end{lemma}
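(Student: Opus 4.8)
The plan is to verify the two inclusions $I_3\subseteq I_2$ and $I_2\subseteq I_1$ separately, in each case by unwinding the definitions of the three ideals in terms of how $\overline{\mathbb F}_p[GL_2(\mathbb F_p)]$ acts on the irreducible mod $p$ representations. The main structural tool is the decomposition of the group algebra $\overline{\mathbb F}_p[GL_2(\mathbb F_p)]$ according to the family of irreducible mod $p$ representations: each such representation $\rho$ gives a two-sided ideal consisting of elements acting trivially on every irreducible mod $p$ representation other than $\rho$, and more generally, for any set $\Sigma$ of irreducible mod $p$ representations there is a two-sided ideal of elements acting trivially outside $\Sigma$. Both $I_1$ and $I_2$ are of this shape: $I_1$ corresponds to the set $\{(\det)^j\otimes\Sym^{p-1}(V): 0\le j\le p-1\}$, while $I_2$ corresponds to the empty set. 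Since the empty set is contained in that set of twist of $\Sym^{p-1}(V)$, the inclusion $I_2\subseteq I_1$ is immediate from this monotonicity: an element acting trivially on \emph{all} irreducible mod $p$ representations in particular acts trivially on all of them except the $(\det)^j\otimes\Sym^{p-1}(V)$.

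For $I_3\subseteq I_2$, I would argue that each generator of $I_3$ already lies in $I_2$, and then use that $I_2$ is a two-sided ideal to conclude. A generator of $I_3$ has the form $\sum_g f(g)\,g$ where $f$ is a matrix coefficient of some irreducible mod $p$ representation $\rho$ not isomorphic to any $(\det)^j\otimes\Sym^{p-1}(V)$; I need to show such an element acts as zero on every irreducible mod $p$ representation $\sigma$. When $\sigma\not\cong\rho$, this is Schur orthogonality of matrix coefficients across distinct irreducibles over the (splitting, algebraically closed) field $\overline{\mathbb F}_p$ — the image of such a group-algebra element in $\operatorname{End}(\sigma)$ vanishes. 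The subtle point is the case $\sigma\cong\rho$: a priori $\sum_g f(g) g$ need \emph{not} act trivially on $\rho$ itself. This is where the hypothesis that $f$ is a matrix coefficient of an irreducible representation in characteristic $p$, rather than characteristic $0$, should be used together with the fact that in characteristic $p$ the sum $\sum_{g} 1$ over the group (or over suitable subgroups) vanishes, so that the relevant ``inner product'' $\sum_g f(g)$-type sums that would detect a nonzero action on $\rho$ are forced to be zero; equivalently, the principal block behaves degenerately mod $p$. I expect this to be the main obstacle: one must pin down exactly why a matrix coefficient of $\rho$ contracted against $\rho$ gives zero in characteristic $p$, presumably because $p\mid |GL_2(\mathbb F_p)|$ and the relevant trace or dimension factor is divisible by $p$, or because the endomorphism one lands in is nilpotent and lies outside the semisimple quotient.

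Once both generating elements and the ideal property are in hand, $I_3\subseteq I_2$ follows, and combined with $I_2\subseteq I_1$ we obtain the chain $I_3\subseteq I_2\subseteq I_1$. I would organize the write-up as: (i) recall the two-sided ideal attached to a set of irreducible mod $p$ representations and note $I_1,I_2$ are instances, giving $I_2\subseteq I_1$ for free; (ii) show each matrix-coefficient generator of $I_3$ annihilates every irreducible mod $p$ representation, splitting into the easy off-diagonal case (Schur orthogonality) and the characteristic-$p$ diagonal case; (iii) invoke that $I_2$ is an ideal to pass from generators to all of $I_3$. The only place where genuine care is required — and where I would slow down rather than wave hands — is step (ii), diagonal case.
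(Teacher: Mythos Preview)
Your route is genuinely different from the paper's. The paper does not split into off-diagonal and diagonal cases at all; it instead observes that a matrix coefficient of $\det^j\otimes\Sym^i(V)$ is $\det^j$ times a polynomial of degree $i$ in the four matrix entries, so with $i\le p-2$ for $f$ and $i\le p-1$ for $f'$ the product $f(g)f'(g)$ is a power of $\det$ times a polynomial of degree $<2(p-1)$. After normalising the exponent of $\det$ to lie in $\{1,\dots,p-1\}$ one may extend the sum from $GL_2(\mathbb F_p)$ to all of $M_2(\mathbb F_p)=\mathbb F_p^{4}$ (a positive power of $\det$ kills the singular matrices), and the total degree is then $<4(p-1)$; the sum over $\mathbb F_p^{4}$ of any such polynomial vanishes by the standard Chevalley--Warning computation. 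This gives $\sum_g f(g)f'(g)=0$ uniformly, with no case distinction.

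Your Schur-orthogonality strategy can also be pushed through, but the gap you flag is real and your speculation does not quite hit the mark. First a bookkeeping correction: since you are summing $\rho(g)_{ij}\,\sigma(g)_{kl}$ over $g$, the invariants that control vanishing live in $\rho\otimes\sigma$, so the case that survives is $\sigma\cong\rho^\vee$, not $\sigma\cong\rho$. For that surviving case, the map $T\mapsto\sum_g\rho(g)\,T\,\rho(g)^{-1}$ on $\mathrm{End}(V_\rho)$ is a scalar $\lambda(T)\cdot\mathrm{id}$ by Schur, and taking traces gives $(\dim\rho)\,\lambda(T)=|G|\cdot\mathrm{tr}(T)=0$ in $\overline{\mathbb F}_p$. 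The decisive point---and the reason the twists of $\Sym^{p-1}(V)$ are excluded from the generators of $I_3$---is that $\dim\rho=i+1\le p-1$ is \emph{invertible} in $\overline{\mathbb F}_p$, so $\lambda(T)=0$. (Your phrase ``the relevant trace or dimension factor is divisible by $p$'' is backwards: it is $|G|$ that vanishes mod $p$, while $\dim\rho$ must be a unit so you can divide by it.) Once this is said, your argument closes; compared with the paper's, it is more representation-theoretic and explains conceptually why $\Sym^{p-1}$ is special, whereas the paper's polynomial-degree argument is shorter and entirely elementary.
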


\begin{proof} The inequality $I_2 \subseteq I_1$ follows from the definition. To check $I_3 \subseteq I_2$, it suffices to check that for $f(g)$ a matrix coefficient of an irreducible representation other than $(\det)^j \otimes \Sym^{p-1} (V)$, and $f'(g)$ a matrix coefficient of an arbitrary irreducible representation $W$, we have $\sum_g f(g) f'(g) =0$, since knowing this for every coefficient of $W$ implies that $\sum_g f(g)$ acts trivially on $W$.

Note that $f(g)$ is a power of $\det$ times a polynomial of degree $<p-1$ in the coefficients of the matrix, while $f'(g)$ is a power of $\det$ times a polynomial of degree $\leq p-1$ in the coefficients of the matrix, so their product is a power of $\det$ times a polynomial of degree $<2(p-1)$ in the coefficients of the matrix.

We can replace the power of $\det$ by a power between $1$ and $p-1$ that is congruent to it modulo $p-1$ without affecting the sum. Because a positive power of $\det$ vanishes on matrices of determinant $0$, we may as well view the sum as a sum over all matrices instead of just invertible ones.

The power of $\det$ can be represented by a polynomial of degree $\leq 2(p-1)$ in the coefficients of the matrix. The final sum is the sum over $\mathbb F_p^4$ of a polynomial of degree $<4(p-1)$ and thus vanishes, as any monomial whose sum over $\mathbb F_p^4$ is nonvanishing must have degree at least $p-1$ in each variable and hence total degree at least $4(p-1)$ (as in the proof of Chevalley-Warning). \end{proof}

\begin{lemma}\label{easyproduct}We have the identity \[I_1 I_3 =0 .\] \end{lemma}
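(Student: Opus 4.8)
The plan is to reduce the identity $I_1 I_3 = 0$ to a statement about a single generator of $I_3$, and then prove that by expanding a matrix coefficient.

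First I would set up the reduction. By definition $I_3$ is the two-sided ideal generated by the elements $e_f := \sum_{g} f(g)\, g$ as $f$ runs over matrix coefficients of ``bad'' irreducibles (those not of the form $(\det)^j \otimes \Sym^{p-1}(V)$), so every element of $I_1 I_3$ is a sum of terms $a\cdot (x e_f y)$ with $a \in I_1$ and $x,y \in \overline{\mathbb F}_p[GL_2(\mathbb F_p)]$. Since $I_1$ is a two-sided ideal, $ax \in I_1$, so $a(x e_f y) = (ax) e_f y$; hence it suffices to prove $a' e_f = 0$ for every $a' \in I_1$ and every generator $e_f$.

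Next I would carry out the computation. Fix $a = \sum_g a_g g \in I_1$ and a generator $e_f$, and write $f(g) = \phi(\rho_W(g) v)$ for the relevant bad irreducible $(W,\rho_W)$ and some $\phi \in W^*$, $v \in W$. Reindexing the product by $k = gh$ gives
\[ a e_f = \sum_{k}\Big(\sum_{g} a_g\, f(g^{-1}k)\Big) k, \]
so it is enough to show each coefficient vanishes. Since $f(g^{-1}k) = \phi\big(\rho_W(g)^{-1}\rho_W(k)v\big) = (\rho_{W^*}(g)\phi)(\rho_W(k)v)$, where $\rho_{W^*}$ denotes the dual representation, the coefficient of $k$ equals $\big(\rho_{W^*}(a)\phi\big)(\rho_W(k)v)$; that is, it is entirely controlled by how $a$ acts on $W^*$.

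Finally I would invoke the duality observation recorded just before the definition of $I_1$: the dual of $(\det)^j \otimes \Sym^i(V)$ is $(\det)^{p-1-j}\otimes \Sym^i(V)$, which involves the same symmetric power $i$, so the class of bad irreducibles is closed under duality. Hence $W^*$ is bad, and by the defining property of $I_1$ the element $a$ acts as zero on $W^*$, i.e. $\rho_{W^*}(a) = 0$. Therefore every coefficient of $a e_f$ vanishes, so $a e_f = 0$, and $I_1 I_3 = 0$ follows. There is no serious obstacle here; the one point to watch is that $I_1$ is defined to annihilate bad representations while the computation forces us to evaluate $a$ on $W^*$, and the duality-invariance of the class of bad representations is exactly what makes this step legitimate.
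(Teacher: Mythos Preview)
Your proof is correct and follows essentially the same route as the paper: compute the product of an element of $I_1$ with a matrix-coefficient generator of $I_3$, rewrite each coefficient as the action of the $I_1$ element on the dual representation $W^\vee$, and then use that the class of ``bad'' irreducibles is closed under duality to conclude this action vanishes. Your reduction step (absorbing the left factor $x$ into $I_1$) is slightly more explicit than the paper's write-up, but the substance is identical.
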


\begin{proof}  This follows from the definition. If $\sum_g f(g) g$ is in $I_1$, and $\langle v_1, g' v_2\rangle$ is a matrix coefficient of an irreducible representation $W$ other than $(\det)^j \otimes \Sym^{p-1}$ and so $\sum_{g'} \langle v_1, g'v_2\rangle g'$ in $I_3$, then \[\left(\sum_{h} f(h)h\right) \left(\sum_{g'} \langle v_1, g'v_2\rangle g'\right) = \sum_{g} \left(\sum_{h} f(h)\langle v_1, h^{-1} g v_2\rangle \right)g\] (where $g = hg'$) but $\sum_{h} f(h)\langle v_1, h^{-1} g v_2\rangle = \sum_{h} f(h)\left\langle\left (h^{-1}\right)^{T} v_1,  g v_2\right\rangle$ is a matrix coefficient of the action of $\sum_h f(h) \left (h^{-1}\right)^{T}$ on $W$, hence a matrix coefficient of the action of $\sum_h f(h) h$ on $W^\vee$, which vanishes by the assumption that $\sum_h f(h) h = \sum_g f(g) g$ is in $I_1$.

\end{proof}

\begin{lemma}\label{charactercriterion} Let $M_1 , M_2, M_3$ be finite-dimensional bimodules for a finite-dimensional algebra $R$ over an algebraically closed field. Let $f: M_1 \otimes_R M_2 \to M_3$ be a homomorphism of $R$-bimodules.

If $f$ is nontrivial, then there must be irreducible left $R$-modules $V_1,V_2,V_3$ such that $V_1 \otimes V_2^\vee$ is a Jordan-Holder factor of $M_1$, $V_2 \otimes V_3^\vee$ is a Jordan-Holder factor of $M_2$, and $V_1 \otimes V_3^\vee$ is a Jordan-Holder factor of $M_3$. 

Here we interpret $V_1 \otimes V_2^\vee$ as a bimodule where the left $R$ acts on $V_1$ and the right $R$ acts on $V_2^\vee$, and similarly for the other tensor products.  \end{lemma}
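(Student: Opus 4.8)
The plan is to reduce, one bimodule at a time, to the situation where $M_1$, $M_2$, and $M_3$ are all \emph{simple} $R$-bimodules, and then carry out a single explicit computation. Write $\bar k$ for the algebraically closed ground field. I will use throughout that a simple $R$-bimodule --- equivalently, a simple module over $R \otimes_{\bar k} R^{\mathrm{op}}$ --- is exactly one of the form $V \otimes_{\bar k} W^\vee$ for irreducible left $R$-modules $V$ and $W$, with $R$ acting on the left through $V$ and on the right through $W^\vee$ as in the statement; this is the standard description of simple modules over a tensor product of finite-dimensional algebras over an algebraically closed field.

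First I would reduce $M_3$. Since $f \neq 0$, its image is a nonzero sub-bimodule of $M_3$; picking a maximal proper sub-bimodule of $\operatorname{im} f$ and passing to the quotient yields a simple bimodule $W$, which is a Jordan-Holder factor of $M_3$ (a simple subquotient of it), together with a nonzero bimodule map $M_1 \otimes_R M_2 \to W$. So we may assume $M_3$ is simple. Next I would reduce $M_1$: fix a composition series $0 = A_0 \subset A_1 \subset \dots \subset A_a = M_1$ with simple quotients $P_i = A_i/A_{i-1}$, and set $C_i = \operatorname{im}\bigl(A_i \otimes_R M_2 \to M_1 \otimes_R M_2\bigr)$. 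Applying the right-exact functor $- \otimes_R M_2$ to $A_{i-1} \hookrightarrow A_i \twoheadrightarrow P_i$ shows that $C_i/C_{i-1}$ is a quotient of $P_i \otimes_R M_2$. As $f$ is nonzero on $C_a = M_1 \otimes_R M_2$ but zero on $C_0 = 0$, for the least $i$ with $f(C_i) \neq 0$ the map $f$ descends to a nonzero bimodule map $C_i/C_{i-1} \to M_3$, and precomposing with $P_i \otimes_R M_2 \twoheadrightarrow C_i/C_{i-1}$ produces a nonzero bimodule map $P_i \otimes_R M_2 \to M_3$ with $P_i$ a composition factor of $M_1$. Hence we may assume $M_1$ is simple, say $M_1 = V_1 \otimes Z^\vee$; the identical argument applied to the right-exact functor $(V_1 \otimes Z^\vee) \otimes_R -$ and a composition series of $M_2$ then lets us also assume $M_2 = V_2' \otimes V_3^\vee$ is simple, a composition factor of the original $M_2$.

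In this reduced situation $M_1 \otimes_R M_2 = (V_1 \otimes Z^\vee) \otimes_R (V_2' \otimes V_3^\vee) = V_1 \otimes (Z^\vee \otimes_R V_2') \otimes V_3^\vee$, so the only remaining computation is of $Z^\vee \otimes_R V_2'$ for irreducible left $R$-modules $Z$ and $V_2'$. Both are annihilated by $\operatorname{rad} R$, so this tensor product is the same taken over $\bar R := R/\operatorname{rad} R \cong \prod_i M_{n_i}(\bar k)$; over $\bar R$ each of $Z^\vee$ and $V_2'$ is supported on a single Wedderburn block, and the tensor product over a matrix algebra $M_n(\bar k)$ of its simple right module with its simple left module is one-dimensional, while modules on different blocks tensor to $0$. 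Therefore $Z^\vee \otimes_R V_2'$ is $\bar k$ if $Z \cong V_2'$ and is $0$ otherwise. Since our map to the nonzero module $M_3$ is nonzero we must be in the first case, so $M_1 \otimes_R M_2 \cong V_1 \otimes V_3^\vee$ is a simple bimodule and maps isomorphically onto the simple $M_3$. Setting $V_2 := Z$, we conclude that $V_1 \otimes V_2^\vee$, $V_2 \otimes V_3^\vee$, and $V_1 \otimes V_3^\vee$ are Jordan-Holder factors of the original $M_1$, $M_2$, and $M_3$, which is the claim.

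The one delicate point, and the place where one must be careful, is that $\otimes_R$ is not exact: one cannot simply say that the composition factors of $M_1 \otimes_R M_2$ are products of composition factors of $M_1$ and $M_2$. Right-exactness is nonetheless enough, since it controls the subquotients $C_i/C_{i-1}$ of the image filtration as \emph{quotients} of $P_i \otimes_R M_2$, and at every stage all that is needed downstream is the existence of a nonzero bimodule map into $M_3$, never any control of kernels. Everything else --- the classification of simple bimodules over an algebraically closed field, and the reduction modulo $\operatorname{rad} R$ --- is routine.
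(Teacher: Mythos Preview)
Your proof is correct and follows essentially the same strategy as the paper: reduce to the case where all three bimodules are simple by passing to subquotients, then compute the tensor product of two simple bimodules directly. Your organization differs only cosmetically---you reduce $M_3$ first and use a full composition series for $M_1$ and $M_2$ rather than a single-step induction, and you are more explicit about invoking right-exactness of $\otimes_R$ and the reduction modulo $\operatorname{rad} R$---but the underlying argument is the same.
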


\begin{proof} If $M_1,M_2,$ and $M_3$ are all irreducible, then we must have $M_1 =W_1 \otimes W_2^\vee$, $M_2 = W_3 \otimes W_4^\vee$, $M_3 = W_5 \otimes W_6^\vee$ for $W_1,W_2,W_3,W_4,W_5,W_6$ irreducible left $R$-modules. But $(W_1 \otimes W_2^\vee) \otimes_R (W_3 \otimes W_4^\vee) =0$ unless 
$W_3 = W_2$, in which case it is $W_1 \otimes W_4^\vee$.  Hence for there to be a nontrivial map from $(W_1 \times W_2^\vee) \otimes_R (W_3 \otimes W_4^\vee) $ to $W_5 \otimes W_6^\vee$, we must have $W_1=W_5, W_4=W_6, W_3= W_2$. So we may take $V_1=W_1,V_2=W_2,V_3=W_6$.

Now we show that if $M_1, M_2,M_3$ are not all irreducible and admit a nontrivial map $f: M_1 \otimes_R M_2 \to M_3$, then there are subquotients $M_1', M_2', M_3'$ of $M_1,M_2,M_3$ and a nontrivial map $f' : M_1' \otimes_R M_2' \to M_3'$.  This implies the desired result by induction on the length of $M_1$ plus the length of $M_2$ plus the length of $M_3$.

If $M_1$ is not irreducible, let $N_1$ be any submodule of $M_1$. If $f$ remains nonzero when composed with the map $N_1 \otimes_R M_2 \to M_1 \otimes_R M_2$, we take $M_1'=N_1$, $M_2' = M_2$, $M_3'=M_3$, and $f'$ this composition. Otherwise, $f$ descends to the quotient $(M_1/N_1) \otimes_R M_2 \to M_3$, so we take $M_1'=M_1/N_1$, $M_2'=M_2, M_3'=M_3$, and $f'$ this descent.

If $M_2$ is not irreducible, we perform the same argument but with $M_1$ and $M_2$ switched.

If $M_3$ is not irreducible, and $f$ is not surjective, we let $M_3'$ be the image of $f$. Otherwise, we let $M_3'$ be a quotient of $M_3$ by any nontrivial proper submodule and $f'$ the composition of $f$ with the projection map. Because $f'$ is surjective, and $M_3'$ is nontrivial, $f'$ is nontrivial.

\end{proof}

\begin{lemma}\label{jordanholder}

The Jordan-Holder factions of $R/I_1$ are $\left((\det)^j \otimes \Sym^{i} (V)\right) \otimes \left((\det)^j \otimes \Sym^{i} (V)\right)^\vee$ for  $i$ and $j$ from $0$ to $p-2$.

The Jordan-Holder factors of $I_1/I_2$ are $\left((\det)^j \otimes \Sym^{p-1} (V)\right) \otimes \left((\det)^j \otimes \Sym^{p-1} (V)\right)^\vee$ for $j$ from $0$ to $p-2$.

 The Jordan-Holder factors of $I_2/I_3$ are

\begin{itemize}

\item $\left(\det^{i_1} \otimes \Sym^{i_2 -i_1} (V)\right) \otimes \left( \det^{i_2} \otimes \Sym^{p-1 - i_2 - i_1}\right)^\vee$ for $ 0\leq i_1< i_2 \leq p-2$.

\item $ \left( \det^{i_2} \otimes \Sym^{p-1 - i_2 - i_1}\right) \otimes \left(\det^{i_1} \otimes \Sym^{i_2 -i_1} (V)\right) ^\vee$ for $ 0\leq i_1< i_2 \leq p-2$.

\item $\left( \det^{b+1} \otimes \Sym^{a-b-2} (V) \right) \otimes \left( \det^a \Sym^{p-1-(a-b)} (V) \right)^\vee$ for $0\leq b<a \leq p-1$

\item $ \left( \det^a \Sym^{p-1-(a-b)} (V) \right)\otimes \left( \det^{b+1} \otimes \Sym^{a-b-2} (V) \right) ^\vee$ for $0\leq b<a \leq p-1$

\end{itemize}

The Jordan-Holder factors of $I_3$ are $\left((\det)^j \otimes \Sym^{i} (V)\right) \otimes \left((\det)^j \otimes \Sym^{i} (V)\right)^\vee$ for  $i$ and $j$ from $0$ to $p-2$, each with multiplicity $1$. 

(These are descriptions as multisets, so the multiplicity of each factor is the number of times it appears on the list - usually one.)

\end{lemma}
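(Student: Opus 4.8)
The plan is to treat the four graded pieces separately: all but $I_2/I_3$ follow from semisimplicity and a duality, while $I_2/I_3$ is extracted from the characteristic-$p$ decomposition matrix of $GL_2(\mathbb F_p)$. Throughout write $R=\overline{\mathbb F}_p[GL_2(\mathbb F_p)]$.

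For the first three pieces: by definition $I_2$ is the set of elements acting trivially on every irreducible mod $p$ representation, so $I_2$ is the Jacobson radical of $R$ and $R/I_2$ is semisimple; over the algebraically closed field $\overline{\mathbb F}_p$, Artin--Wedderburn gives $R/I_2=\bigoplus_W \operatorname{End}_{\overline{\mathbb F}_p}(W)$, the sum over all irreducibles $W=(\det)^j\otimes\Sym^i(V)$, and each $\operatorname{End}(W)\cong W\otimes W^\vee$ is a simple $R$-bimodule since $W$ is absolutely irreducible. Then $I_1/I_2$ is exactly the kernel of the projection of $R/I_2$ onto the product of the factors with $i\ne p-1$, so $I_1/I_2=\bigoplus_j \operatorname{End}((\det)^j\otimes\Sym^{p-1}(V))$ and $R/I_1=\bigoplus_{i\le p-2,\,j}\operatorname{End}((\det)^j\otimes\Sym^i(V))$, the stated lists. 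For $I_3$, one checks that for each irreducible $W$ the coefficient space $C(W)\subseteq R$ spanned by the elements $\sum_g f(g)g$ with $f$ a matrix coefficient of $W$ is a two-sided ideal (it is closed under left and right translation) isomorphic as a bimodule to $\operatorname{End}(W)=W\otimes W^\vee$, hence simple; by definition $I_3=\sum_{i\le p-2}C(W)$. This sum is direct, because the bimodule map $\bigoplus_W \operatorname{End}(W)\to R$, $A\mapsto\sum_g\operatorname{tr}(A\rho_W(g))\,g$, has image $\sum_W C(W)$ and is the transpose, under the Frobenius self-duality $R\cong R^*$, of the surjection $R\to\bigoplus_W\operatorname{End}(W)$; the transpose of a surjection is injective. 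So $I_3=\bigoplus_{i\le p-2}C(W)$ is semisimple with Jordan--Hölder factors $((\det)^j\otimes\Sym^i(V))\otimes((\det)^j\otimes\Sym^i(V))^\vee$, each once, the symmetry of the list reflecting the duality-closedness of the set of irreducibles with a fixed $\Sym^i$.

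The substantive step is $I_2/I_3$, which I would obtain by subtraction: the other three layers of $0\subseteq I_3\subseteq I_2\subseteq I_1\subseteq R$ are now known, so it suffices to compute the bimodule Jordan--Hölder multiset of $R$ and delete them. Standard modular theory says the multiplicity of $W_1\otimes W_2^\vee$ in $R$ equals the Cartan invariant $c_{W_1W_2}=\sum_\rho D_{\rho W_1}D_{\rho W_2}$, where $D$ is the decomposition matrix of $GL_2(\mathbb F_p)$ in characteristic $p$; concretely, $R$ has the same bimodule composition factors as $\bigoplus_\rho\operatorname{End}(\overline{L}_\rho)$ for $\overline{L}_\rho$ the reduction mod $p$ of the characteristic-$0$ irreducible $\rho$, and $\operatorname{End}(\overline{L}_\rho)$ contains $W_1\otimes W_2^\vee$ with multiplicity $[\overline{L}_\rho:W_1][\overline{L}_\rho:W_2]$. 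The reductions mod $p$ of the characteristic-$0$ irreducibles of $GL_2(\mathbb F_p)$ are classical (see \cite{Prasad}): the one-dimensional characters and the Steinberg twists reduce to a single irreducible, while each principal series and each cuspidal representation reduces to a module with exactly two composition factors, each one of the $(\det)^j\otimes\Sym^i(V)$ with $i\le p-2$. Hence every non-Steinberg irreducible occurs in precisely two of these reductions, one finds $c_{WW}=2$ for $W$ non-Steinberg, and $c_{W_1W_2}\in\{0,1\}$ for $W_1\ne W_2$, equal to $1$ exactly when $W_1,W_2$ are the two constituents of some $\overline{L}_\rho$. Subtracting $[R/I_1]$, $[I_1/I_2]$ and $[I_3]$ — and noting $[R/I_1]$ and $[I_3]$ are the same multiset (each non-Steinberg $\operatorname{End}(W)$ once), so together they remove the entire diagonal contribution $c_{WW}=2$ — leaves exactly the bimodules $W_1\otimes W_2^\vee$ with $W_1,W_2$ distinct non-Steinberg constituents of a common $\overline{L}_\rho$, each with multiplicity one. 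The four bullet families are then just the explicit tabulation, with determinant twists restored, of which two symmetric powers co-occur, organized by whether $\rho$ is a principal series or a cuspidal representation and by the two orderings of the unordered pair.

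The main obstacle I expect is entirely in that last tabulation: one must read off from the reduction-mod-$p$ description of the principal series and cuspidal representations the precise determinant power and symmetric-power index of each of the two constituents and verify that this reproduces the four displayed families, checking in particular that the degenerate terms are excluded (for instance a factor $\Sym^{a-b-2}(V)$ should be counted only when $a-b\ge 2$) and that coincidences among the listed bimodules for small $p$ are exactly what makes some multiplicities exceed one, which is the content of the ``usually one'' caveat. It is also worth being careful that the identity $c_{W_1W_2}=\sum_\rho D_{\rho W_1}D_{\rho W_2}$ uses only that reduction mod $p$ preserves composition-factor multiplicities, and that the hypothesis $p>3$ enters precisely to guarantee the generic picture — exceptional multiplicity one, so that $c_{WW}=2$ with no further diagonal terms, and the two non-Steinberg series behaving uniformly.
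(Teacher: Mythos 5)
Your proposal is correct and follows essentially the same route as the paper: the pieces $R/I_1$, $I_1/I_2$, and $I_3$ are identified directly as sums of simple bimodules $W\otimes W^\vee$ (via Wedderburn for the quotients and matrix-coefficient spaces for $I_3$), and $I_2/I_3$ is obtained by computing the bimodule Jordan--H\"older multiset of $R$ from the reductions mod $p$ of the characteristic-zero irreducibles and subtracting the other three layers, with the diagonal multiplicity $2$ for non-Steinberg irreducibles cancelling $[R/I_1]$ and $[I_3]$. Your Cartan-invariant phrasing and the duality argument for the directness of $\sum_W C(W)$ are cosmetic variants of the paper's wording.
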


\begin{proof} The first two statements of this lemma follow from the fact that $R/I_2$ is the product of the matrix algebras of all irreducible representations of $GL_2(\overline{\mathbb F}_p)$ and $R/I_1$ is the product of the matrix algebras of all irreducible representations of $GL_2(\overline{\mathbb F}_p)$ other than $(\det)^j \otimes \Sym^{p-1}$. Hence in particular $I_2/I_3$ is the product of all the matrix algebras of irreducible representations that are of the form $\det^j \otimes \Sym^{p-1}$. The matrix algebra of an irreducible representation $W$, as a bimodule, is precisely $W \otimes W^{\vee}$.

Similarly, $I_3$ is the sum inside $R$ of, for each representation $W= \left((\det)^j \otimes \Sym^{i} (V)\right) $ with $i$ and $j$ from $0$ to $p-2$, the space of matrix coefficients of $W$. The space of matrix coefficients is isomorphic to $W \otimes W^{\vee}$ as a bimodule. Because these spaces for different $W$ are non-isomorphic irreducible bimodules, $I_3$ is also the sum of these spaces as an abstract bimodule. Hence its Jordan-Holder factors are exactly $W \otimes W^{\vee}$ for all these representations. This implies the last statement of this lemma.

We will establish the third statement of this lemma, on the Jordan-Holder factors of $I_2/I_3$, by calculating the multiset of Jordan-Holder factors of $R$ and subtracting off the already calculated Jordan-Holder factors of $R/I_1, I_1/I_2$ and $I_3$.

Observe that the multiset of Jordan-Holder factors of $R$ is the union with multiplicity over each irreducible characteristic $0$ representation $V$ of $GL_2(\mathbb F_p)$ of the multiset of Jordan-Holder factors of $V \otimes V^\vee$. This is because the Jordan-Holder factors of a characteristic $0$ representation, mod $p$, depend only on the isomorphism class of the characteristic $0$ representation (see \cite[Theorem 32]{Serre}), and in characteristic $0$, the group algebra of any group is the sum of the matrix algebras of its irreducible representations.

The Steinberg representations in characteristic $0$ are parameterized by characters of the determinant. They reduce $\mod p$ to the representations $(\det)^j \otimes \Sym^{p-1}(V)$ for $j$ from $0$ to $p-2$ \cite[pp. 6]{Prasad}. Hence their contribution to the Jordan-Holder decomposition of $R$ is exactly the same as that of $I_2/I_3$.

The $p-1$ representations that factor through the determinant reduce mod $p$ to $(\det)^j \otimes \Sym^{0}(V)$ \cite[pp. 6]{Prasad}.

The principal series representations are parameterized by unordered pairs of distinct characters, which mod $p$ can be represented by unordered pairs of distinct integers from $0$ to $p-2$, or equivalently by pairs of integers with $i_1<i_2$. The Jordan-Holder factors of the reduction mod $p$ are $\det^{i_1} \otimes \Sym^{i_2 -i_1} (V)$ and $\det^{i_2} \otimes \Sym^{p-1 - i_2 - i_1}$ \cite[Lemma 4.1]{Prasad}. 

The representation $\det^i \otimes \Sym^j$ appears in exactly $1$ principal series if $j$ lies between $1$ and $p-2$ and $0$ otherwise - indeed if it appears we must have $i_1 =i $ and $i_2 =i+j$ or $i_1= p-1-i-j$, $i_2 = i$, and exactly one of those has $0 \leq  i_1<i_2 \leq p-2$ in that range.

The discrete series representations are parametrized by characters of $\mathbb F_{p^2}$ up to conjugation, which mod $p$ can be represented as $x \mapsto x^a \overline{x}^b$ where $a,b$ are distinct integers from $0$ to $p-1$. Conjugation corresponds to switching $a$ and $b$, so again we may assume $a>b$. In this case, the Jordan-Holder factors are $\det^{b+1} \otimes \Sym^{a-b-2} (V) $ and $\det^a \Sym^{p-1-(a-b)}(V)$ \cite[Lemma 4.2]{Prasad}. 

The representation $\det^j \otimes \Sym^i(V)$ appears in exactly $1$ discrete series if $i$ lies between $0$ and $p-2$ and $0$ otherwise.  Indeed, if $j>0$ and $i+j <p-1$, which implies in particular that $i\leq p-2$, then we may take $a=i+j+1$, $b=j-1$. We have $p-1 \geq a > b \geq 0$ and  $\det^{b+1} \otimes \Sym^{a-b-2} (V) =\det^j \otimes \Sym^i(V)$.  If $i+j \geq p-1$ and $i \leq p-2$, which implies in particular that $j=0$, then we may take $a=j$, $b=i+j+1-p$, and we have $p-1 \geq a > b \geq 0$ and $\det^a \Sym^{p-1-(a-b)}(V)=\det^j \otimes \Sym^i(V)$. Finally if $j=0$ and $i \leq p-2$ then we may take $a=p-1, b=j$ and we have $p-1 \geq a > b \geq 0$ and $\det^a \Sym^{p-1-(a-b)}(V)=\det^j \otimes \Sym^i(V)$. It is easy to see that these formulas give the only possible solutions.

So combining all non-Steinberg representations, each irreducible modular representation $W$ of $GL_2(\mathbb F_p)$, other than the $\det^i \otimes \Sym^{p-1}$, appears with multiplicity exactly $2$. This means that $W \otimes W^{\vee}$ appears with multiplicity exactly two in $\overline{\mathbb F}_p[GL_2(\mathbb F_p)]$, which exactly cancels $R/I_1$ and $I_3$. So the non-diagonal Jordan-Holder factors of $V \otimes V^{\vee}$ for $V$ an irreducible characteristic zero representation other than $W \otimes W^{\vee}$ for $W$ an irreducible characteristic zero representation are exactly those that appear in $I_2/I_3$. The previously discussed formulas of \cite[Lemmas 4.1 and 4.2]{Prasad} demonstrate that these are exactly the factors described in the statement of this lemma, with the first two coming from principal series and the last two coming from discrete series.

\end{proof}

\begin{lemma}\label{hardproduct} \[I_1 I_2 \subseteq I_3\] \end{lemma}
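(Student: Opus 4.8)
The plan is to reduce the statement to a single structural fact about $\overline{\mathbb F}_p[GL_2(\mathbb F_p)]$, namely that its radical has cube zero, in two steps. First I would strip off the Steinberg-type simple modules. Since each $\det^j\otimes\Sym^{p-1}(V)$ is a projective $\overline{\mathbb F}_p[GL_2(\mathbb F_p)]$-module, it is the unique simple module in its block and that block is a full matrix algebra; hence $R:=\overline{\mathbb F}_p[GL_2(\mathbb F_p)]$ splits as a direct product $R=R_{\mathrm{St}}\times R'$, with $R_{\mathrm{St}}$ the (semisimple) product of the matrix algebras of the $\det^j\otimes\Sym^{p-1}(V)$ and $R'$ containing every other simple. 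Unwinding the definitions one finds $I_2=\mathrm{rad}(R)=\mathrm{rad}(R')\subseteq R'$, $I_1=R_{\mathrm{St}}\oplus\mathrm{rad}(R')$, and $I_3\subseteq R'$ (matrix coefficients of non-Steinberg simples act as zero through $R_{\mathrm{St}}$). Because $R_{\mathrm{St}}R'=0$ and $I_2\subseteq R'$, this gives $I_1I_2=(R_{\mathrm{St}}\oplus\mathrm{rad}(R'))\,\mathrm{rad}(R')=\mathrm{rad}(R')^2=I_2^2$, so it remains to prove $I_2^2\subseteq I_3$.

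For the second step, the key input is that $\mathrm{rad}(R')^3=0$; equivalently, the projective indecomposable $\overline{\mathbb F}_p[GL_2(\mathbb F_p)]$-modules other than the $\det^j\otimes\Sym^{p-1}(V)$ all have Loewy length at most $3$ (which one reads off from the explicit description of these projectives in \cite{Prasad}, or from the theory of blocks with cyclic defect group, since a Sylow $p$-subgroup of $GL_2(\mathbb F_p)$ is cyclic of order $p$). Granting this, $\mathrm{rad}(R')^2$ is annihilated on both sides by $\mathrm{rad}(R')$, hence is a semisimple $R$-bimodule lying in the two-sided socle of $R'$. Since $R'$ is symmetric (a direct factor of the symmetric algebra $R$), that socle is $\bigoplus_W W\otimes W^\vee$ summed over the non-Steinberg simples $W$, which is exactly the span of the matrix coefficients of those $W$, i.e. $I_3$. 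Equivalently, one can phrase this through Lemma \ref{jordanholder}: $\mathrm{rad}(R')^2$ is a semisimple bimodule whose Jordan--Hölder factors occur among those of $I_2$, and the off-diagonal ones --- the four families listed in Lemma \ref{jordanholder} --- sit in the middle Loewy layer $\mathrm{rad}(R')/\mathrm{soc}(R')$, not the socle, so only the diagonal factors $W\otimes W^\vee$, whose span is $I_3$, can appear. Either way $I_1I_2=\mathrm{rad}(R')^2\subseteq I_3$ (in fact equality holds).

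The main obstacle is precisely establishing $\mathrm{rad}(R')^3=0$ --- the Loewy-length bound on the non-Steinberg projectives --- which is where one must invoke specific knowledge of the modular representation theory of $GL_2(\mathbb F_p)$ rather than formal manipulation. In particular, Lemma \ref{charactercriterion} on its own does \emph{not} suffice here: applied to the multiplication map $I_1\otimes_R I_2\to R/I_3$ it only yields the necessary conditions that there be simple modules $V_1,V_2,V_3$ with $V_1\otimes V_2^\vee$ a Jordan--Hölder factor of $I_1$, $V_2\otimes V_3^\vee$ one of $I_2$, and $V_1\otimes V_3^\vee$ one of $R/I_3$; but since $I_3\subseteq I_1$ contributes every diagonal factor $W\otimes W^\vee$ to $I_1$, one may take $V_2=V_1$ and let $(V_1,V_3)$ run over any pair from one of the four families, so the conditions are satisfiable and yield no contradiction. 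It is exactly the representation-theoretic fact that these ``middle-layer'' pairs do not already appear in $\mathrm{rad}^2$ that makes the lemma true, and this has no purely tensorial proof.
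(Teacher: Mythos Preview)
Your reduction is sound and the argument goes through once $\mathrm{rad}(R')^{3}=0$ is granted, but your closing paragraph misjudges what Lemma \ref{charactercriterion} can do. The paper \emph{does} prove the lemma via Lemma \ref{charactercriterion}; the point you miss is that it is applied not to $I_1\otimes_R I_2\to R/I_3$ but to the factored map
\[(I_1/I_3)\,\otimes_R\,(I_2/I_3)\longrightarrow I_2/I_3,\]
which exists because $I_1I_3=I_3I_2=0$ by (the two-sided form of) Lemma \ref{easyproduct}. Passing to $I_1/I_3$ kills precisely the diagonal factors $W\otimes W^\vee$ coming from $I_3$ that made your trial application inconclusive. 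One is then left to exclude a triple $V_1,V_2,V_3$ with all of $V_1\otimes V_2^\vee$, $V_2\otimes V_3^\vee$, $V_1\otimes V_3^\vee$ among the \emph{off-diagonal} factors of $I_2/I_3$ listed in Lemma \ref{jordanholder}; the paper recasts this as the nonexistence of a $3$-cycle in a two-coloured graph on the simples (red edges from principal series, blue from discrete series), and rules it out by noting that each vertex carries at most one edge of each colour and that there are no loops.

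So the two routes are genuinely different. Yours packages everything into the single structural fact $\mathrm{rad}(R')^{3}=0$, which is conceptually clean but has to be imported --- and note that ``Sylow $p$-subgroup cyclic of order $p$'' alone is not enough: you also need exceptional multiplicity $1$ and that each Brauer tree is a line, both of which ultimately rest on the same decomposition data assembled in Lemma \ref{jordanholder}. The paper's route stays self-contained, deriving the result directly from those Jordan--H\"older computations together with the combinatorial criterion of Lemma \ref{charactercriterion}; indeed it yields your key input as a corollary, since $I_2^{3}\subseteq I_2I_3\subseteq I_1I_3=0$.
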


\begin{proof}  Let $R= \overline{\mathbb F}_p[GL_2(\mathbb F_p)]$. We know that the multiplication map $R \otimes_R R \to R$ restricts to a map $I_1 \otimes_R I_2 \to I_2$. We need to show the quotient map $I_1 \otimes_R I_2 \to I_2/I_3$ vanishes. It suffices to show that the map it factors through, $I_1/I_3 \otimes_R I_2/I_3 \to I_2/I_3$ vanishes. We apply Lemma \ref{charactercriterion} to $M_1 = I_1 /I_3$, $M_2 = I_2/I_3$, $M_3 =I_2/I_3$

Note first that the Jordan-Holder factors of $I_1 / I_2$ only involve representations with $j=p-1$, while the Jordan-Holder factors of $I_2 / I_3$ never involve those representations. Because $j$ is preserved by duality, this means that any triples as in Lemma \ref{charactercriterion} between the Jordan-Holder factors of $I_1/I_3$, $I_2/I_3$, and $I_2/I_3$ must in fact only involve Jordan-Holder factors of $I_2/I_3$.

So it is sufficient to check there is no $V_1, V_2, V_3$ with $V_1 \otimes V_2^{\vee}$, $V_2\otimes V_3^{\vee}$, and $V_1 \otimes V_3^\vee$ all Jordan-Holder factors of $I_2/I_3$. Using Lemma \ref{jordanholder}, we can express this statement graph-theoretically:

Consider a graph with vertices the irreducible mod $p$ representations of $GL_2(\mathbb F_p)$ and with:

\begin{enumerate}

\item One edge connecting $\det^{i_1} \otimes \Sym^{i_2 -i_1} (V)$ to $\det^{i_2} \otimes \Sym^{p-1 - i_2 - i_1}$ for each pair of integers $i_1,i_2$ with $0 \leq i_1 < i_2 \leq p-2$.

\item One edge connecting $\det^{b+1} \otimes \Sym^{a-b-2} (V)$ to $ \det^a \Sym^{p-1-(a-b)} (V)$ for each pair of integers $a,b$ with $0 \leq b < a \leq p-1$.

\end{enumerate}

By construction and Lemma \ref{jordanholder}, $V_1 \otimes V_2^\vee$ is a Jordan-Holder factor of $I_2/I_3$ if and only if there is an edge connecting $V_1$ to $V_2$ in this gaph. By Lemma \ref{charactercriterion}, it is sufficient to show that this graph has no $3$-cycles.

 If we color the first type of edges red and the second type blue, then each vertex is on at most one red edge and at most one blue edge. In any 3-cycle, two adjacent edges are necessarily the same color. Because they are the same color and touch the same vertex, they must be the same edge. Hence the third edge of the three-cycle must be an edge from a vertex to itself, i.e. a loop. Therefore it is sufficient to show that this graph has no loops.
 
 This is trivial to check from the construction of the edges: If the two vertices were equal for an edge of the first type, we would have $i_1=i_2$, which is impossible. If the two vertices were equal for an edge of the second type, we would have $b+1 = a$, which implies $a-b=1$, and $p-1- (a-b) = (a-b-2)$, which implies $a-b= (p+1)/2$, and these contradict each other as $(p+1)/2\neq 1$.

\end{proof}

We can embed $\overline{\mathbb F}_p[PSL_2(\mathbb F_p)]$ into $\overline{\mathbb F}_p[SL_2(\mathbb F_p)]$ by sending each element to the sum of its inverse images, divided by $2$ (using $p>2$). We can then embed $\overline{\mathbb F}_p[SL_2(\mathbb F_p)]$ into $\overline{\mathbb F}_p[GL_2(\mathbb F_p)]$ directly. 

Let $J_1,J_2,J_3$ be the inverse images of $I_1, I_2$, $I_3$ under this map. 

Then

\begin{lemma}\label{verification} We have the inclusions and identities \begin{enumerate} \item \[J_3 \subseteq J_2 \subseteq J_1\] \item  \[J_1J_3=0\]\item \[J_1J_2 \subseteq J_3\] \end{enumerate}\end{lemma}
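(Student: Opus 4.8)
The plan is to reduce everything to the single observation that the composite embedding
\[
\phi\colon \overline{\mathbb F}_p[PSL_2(\mathbb F_p)] \to \overline{\mathbb F}_p[SL_2(\mathbb F_p)] \to \overline{\mathbb F}_p[GL_2(\mathbb F_p)]
\]
defining $J_1, J_2, J_3$ is an injective homomorphism of (not necessarily unital) $\overline{\mathbb F}_p$-algebras. Granting this, all three assertions follow formally from Lemmas \ref{filtration}, \ref{easyproduct}, and \ref{hardproduct} together with elementary properties of preimages of ideals.

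First I would verify that $\phi$ is an injective algebra map. Let $z = -I \in SL_2(\mathbb F_p)$ be the central element of order $2$ (it has order $2$ since $p > 2$), so that $PSL_2(\mathbb F_p) = SL_2(\mathbb F_p)/\langle z \rangle$, and set $e = \frac{1}{2}(1 + z) \in \overline{\mathbb F}_p[SL_2(\mathbb F_p)]$; since $z^2 = 1$ and $2$ is invertible, $e$ is a central idempotent. The first map sends a class $\bar g$ with lift $g$ to $\frac{1}{2}(g + zg) = eg$, which is independent of the choice of lift (replacing $g$ by $zg$ merely swaps the two summands), so the map is well defined and $\overline{\mathbb F}_p$-linear. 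It is multiplicative because $(eg)(eh) = e^2 gh = e(gh)$ for all $g, h \in SL_2(\mathbb F_p)$, and it is injective because composing it with the natural surjection $\overline{\mathbb F}_p[SL_2(\mathbb F_p)] \to \overline{\mathbb F}_p[PSL_2(\mathbb F_p)]$ yields the identity, exhibiting it as a section. The second map is the algebra homomorphism induced by the subgroup inclusion $SL_2(\mathbb F_p) \hookrightarrow GL_2(\mathbb F_p)$, which sends distinct basis elements to distinct basis elements and is hence injective. Therefore $\phi$ is an injective algebra map, and in particular each $J_i = \phi^{-1}(I_i)$ is a two-sided ideal of $\overline{\mathbb F}_p[PSL_2(\mathbb F_p)]$.

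With this in hand the three parts are short. For (1), preimages preserve inclusions, so Lemma \ref{filtration} gives $J_3 = \phi^{-1}(I_3) \subseteq \phi^{-1}(I_2) = J_2 \subseteq \phi^{-1}(I_1) = J_1$. For (2), since $\phi$ is linear and multiplicative, $\phi(J_1 J_3) = \phi(J_1)\phi(J_3) \subseteq I_1 I_3$, which vanishes by Lemma \ref{easyproduct}; injectivity of $\phi$ then forces $J_1 J_3 = 0$. For (3), likewise $\phi(J_1 J_2) = \phi(J_1)\phi(J_2) \subseteq I_1 I_2 \subseteq I_3$ by Lemma \ref{hardproduct}, so every element of $J_1 J_2$ maps into $I_3$ under $\phi$ and therefore lies in $\phi^{-1}(I_3) = J_3$.

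The argument is essentially bookkeeping and I do not expect a genuine obstacle; the one point worth care is that in ``the sum of its inverse images'' the two inverse images of $\bar g$ are the distinct group elements $g$ and $zg = -g$ of $SL_2(\mathbb F_p)$ --- distinct basis vectors of the group algebra, not a vector and its additive inverse --- so that the image $\frac{1}{2}(g + zg)$ equals $eg$ for the central idempotent $e$ above, which is precisely what makes the map multiplicative.
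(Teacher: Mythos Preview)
Your proof is correct and follows the same approach as the paper, which simply states that the three assertions ``follow immediately by restriction from Lemmas \ref{filtration}, \ref{easyproduct}, and \ref{hardproduct} respectively.'' You have merely spelled out what ``restriction'' means here: the composite $\phi$ is an injective (non-unital) algebra homomorphism, so inclusions and product relations among the $I_i$ pull back to the $J_i = \phi^{-1}(I_i)$. Your identification of the first map with $\bar g \mapsto eg$ for the central idempotent $e = \tfrac{1}{2}(1+z)$ is exactly the verification the paper leaves implicit.
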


\begin{proof}These follow immediately by restriction from Lemmas \ref{filtration}, \ref{easyproduct}, and \ref{hardproduct} respectively. \end{proof}

\begin{lemma}  We have the identities \[ \dim(\overline{\mathbb F}_p[PSL_2(\mathbb F_p)]/J_1) = \frac{p(p-1)(p-2)}{6}\]   \[ \dim J_1/J_2 = p^2 \]     \[ \dim J_2/J_3 =\frac{p (p^2-7)}{6} \]    \[ \dim J_3  =  \frac{p(p-1)(p-2)}{6}\]
\end{lemma}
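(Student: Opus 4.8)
The plan is to compute each of the four dimensions by working inside $\overline{\mathbb F}_p[GL_2(\mathbb F_p)]$ and then restricting to the subalgebra $\overline{\mathbb F}_p[PSL_2(\mathbb F_p)]$, using the Jordan--H\"older descriptions from Lemma \ref{jordanholder}. The key observation is that the subalgebra $\overline{\mathbb F}_p[PSL_2(\mathbb F_p)] \subseteq \overline{\mathbb F}_p[SL_2(\mathbb F_p)] \subseteq \overline{\mathbb F}_p[GL_2(\mathbb F_p)]$ picks out exactly those bimodule constituents supported on representations that factor through $PSL_2(\mathbb F_p)$. Concretely, $SL_2(\mathbb F_p)$ sits inside $GL_2(\mathbb F_p)$ as the kernel of $\det$, and the representations of $GL_2(\mathbb F_p)$ appearing in the group algebra of $SL_2(\mathbb F_p)$ are obtained by restriction; on restriction to $SL_2$ the twist $(\det)^j$ becomes trivial, so a representation $(\det)^j \otimes \Sym^i(V)$ restricts to $\Sym^i(V)$, independent of $j$. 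Passing further to $PSL_2(\mathbb F_p)$ (the quotient of $SL_2$ by its center $\{\pm 1\}$, using $p>2$) retains only those $\Sym^i(V)$ on which $-1$ acts trivially, i.e. those with $i$ even. So the irreducible $\overline{\mathbb F}_p$-representations of $PSL_2(\mathbb F_p)$ are the $\Sym^i(V)$ with $i$ even, $0 \le i \le p-1$, of which the Steinberg $\Sym^{p-1}(V)$ is one (as $p-1$ is even). This gives $(p+1)/2$ irreducible representations, of dimensions $1,3,5,\dots,p$.

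The first and fourth identities are the most direct. First I would note that $J_1 = I_1 \cap \overline{\mathbb F}_p[PSL_2(\mathbb F_p)]$, so $\overline{\mathbb F}_p[PSL_2(\mathbb F_p)]/J_1$ injects into $R/I_1$, and in fact equals the image of $\overline{\mathbb F}_p[PSL_2(\mathbb F_p)]$ there, which is $\prod W \otimes W^\vee$ over the irreducible $PSL_2$-representations $W$ other than the Steinberg, i.e. over $\Sym^i(V)$ for $i$ even, $0 \le i \le p-3$. Summing $(\dim W)^2$ over $W \in \{1, \Sym^2, \Sym^4, \dots, \Sym^{p-3}\}$, i.e. $1^2 + 3^2 + \dots + (p-2)^2 = \sum_{k=0}^{(p-3)/2}(2k+1)^2$, should yield $\frac{p(p-1)(p-2)}{6}$ after simplification. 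For $J_3$: since $I_3 = \bigoplus_{W \ne \text{St}} W\otimes W^\vee$ as a bimodule with each $W$ a $GL_2$-irreducible, $J_3$ is the part of this supported on $PSL_2$-representations, namely $\bigoplus W\otimes W^\vee$ over the same set of $W$, giving the same dimension $\frac{p(p-1)(p-2)}{6}$. For $J_1/J_2$: $I_1/I_2 = \bigoplus_j \text{St}_j \otimes \text{St}_j^\vee$ with $\dim \text{St}_j = p$, and restricting to $PSL_2$ leaves exactly the single Steinberg $\Sym^{p-1}(V)$ of $PSL_2$, contributing $p^2$. Finally $J_2/J_3$ is pinned down by $\dim \overline{\mathbb F}_p[PSL_2(\mathbb F_p)] = |PSL_2(\mathbb F_p)| = \frac{p(p^2-1)}{2}$ and subtracting the other three pieces: $\frac{p(p^2-1)}{2} - 2\cdot\frac{p(p-1)(p-2)}{6} - p^2 = \frac{p(p^2-7)}{6}$, which I would verify by direct algebra.

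The main obstacle I anticipate is making rigorous the claim that $J_i$ (defined as the inverse image of $I_i$ under the composite embedding $\overline{\mathbb F}_p[PSL_2(\mathbb F_p)] \hookrightarrow \overline{\mathbb F}_p[SL_2(\mathbb F_p)] \hookrightarrow \overline{\mathbb F}_p[GL_2(\mathbb F_p)]$) has Jordan--H\"older factors given precisely by the ``$PSL_2$-part'' of the factors of $I_i$. The embedding of $\overline{\mathbb F}_p[SL_2(\mathbb F_p)]$ into $\overline{\mathbb F}_p[GL_2(\mathbb F_p)]$ as a subalgebra is not a direct summand, and pulling back ideals can behave subtly, so I would want to argue instead at the level of the quotient algebras $R/I_j$, which are (products of) matrix algebras of $GL_2$-irreducibles, and track how $\overline{\mathbb F}_p[PSL_2(\mathbb F_p)]$ maps in. The cleanest route is: $\overline{\mathbb F}_p[PSL_2(\mathbb F_p)] = \prod_W \mathrm{End}(W)$ is semisimple (since $p \nmid |PSL_2(\mathbb F_p)|$? — no, $p \mid |PSL_2(\mathbb F_p)|$, so this is false; $\overline{\mathbb F}_p[PSL_2(\mathbb F_p)]$ is not semisimple), so one cannot avoid the filtration entirely. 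Instead I would argue that the composite map identifies $\overline{\mathbb F}_p[PSL_2(\mathbb F_p)]$ with the image of a suitable idempotent $e \in \overline{\mathbb F}_p[GL_2(\mathbb F_p)]$ acting by $e R e$ where $e$ averages over the center and the scalar torus, and that $e I_j e = J_j$ under this identification; then $J_j/J_{j+1} = e(I_j/I_{j+1})e$ and the Jordan--H\"older factors of $e(I_j/I_{j+1})e$ are exactly the constituents $V_1 \otimes V_2^\vee$ with $eV_1 \ne 0$ and $eV_2 \ne 0$, i.e. with $V_1, V_2$ factoring through $PSL_2$. Combined with the explicit factor lists in Lemma \ref{jordanholder} — noting that a factor $V_1 \otimes V_2^\vee$ survives iff both $V_1$ and $V_2$ are of the form $\Sym^i(V)$ with $i$ even — the four dimension counts then reduce to the finite sums above. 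I would double-check the $J_2/J_3$ count independently by restricting the explicit list of four families of Jordan--H\"older factors in Lemma \ref{jordanholder} to those pairs both lying in $\{\Sym^i(V): i \text{ even}\}$ and summing $\dim V_1 \cdot \dim V_2$, as a consistency check against the subtraction.
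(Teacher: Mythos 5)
Your computational strategy is the same as the paper's: compute $\dim(A/J_2)$ and $\dim(A/J_1)$ (where $A=\overline{\mathbb F}_p[PSL_2(\mathbb F_p)]$) as sums of odd squares coming from the even symmetric powers $\Sym^{2k}(V)$, take the difference for $J_1/J_2$, compute $\dim J_3$ directly, and get $J_2/J_3$ by subtraction from $|PSL_2(\mathbb F_p)|$; the arithmetic all checks out. However, there is a genuine gap exactly at the point you flag as the main obstacle, and the fix you propose does not work. The image of $A$ in $R=\overline{\mathbb F}_p[GL_2(\mathbb F_p)]$ is \emph{not} a corner $eRe$: averaging over the scalar torus (which \emph{is} the center of $GL_2$) would produce something governed by $PGL_2(\mathbb F_p)$, whose order is twice that of $PSL_2(\mathbb F_p)$, and more fundamentally the inclusion $\overline{\mathbb F}_p[SL_2]\hookrightarrow\overline{\mathbb F}_p[GL_2]$ is as the span of a subgroup's elements, not as $eRe$ for any idempotent. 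So the clean formula $J_j=eI_je$ and the ``a factor survives iff both $V_1,V_2$ factor through $PSL_2$'' principle are not available, and your consistency check for $J_2/J_3$ rests on the same unavailable principle.

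Concretely, two steps need actual arguments. First, to get $\dim(A/J_1)$ and $\dim(A/J_2)$ you must show the map from $A$ to the product of the matrix algebras $\mathrm{End}(\Sym^{2k}(V))$ is \emph{surjective}; the paper does this by noting the $\Sym^{2k}(V)$ remain irreducible and pairwise non-isomorphic over $PSL_2(\mathbb F_p)$ and invoking Artin--Wedderburn/density. Second, and more seriously, your claim that ``$J_3$ is the part of $I_3$ supported on $PSL_2$-representations'' assumes the hard inclusion: you must show that a matrix coefficient of $\Sym^{2k}(V)$, regarded as a function on $SL_2(\mathbb F_p)$ extended by zero to $GL_2(\mathbb F_p)$ (and symmetrized under $\pm 1$), actually lies in $I_3$, i.e.\ is a linear combination of matrix coefficients of $GL_2$-irreducibles other than the $\det^j\otimes\Sym^{p-1}(V)$. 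The paper's proof supplies this via the averaging identity $\langle v_1,\rho(g)v_2\rangle=\frac{1}{p-1}\sum_{j=0}^{p-2}\det(g)^j\langle v_1,\rho(g)v_2\rangle$ for $g\in SL_2$, with the right-hand side vanishing off $SL_2$; without this (or an equivalent), your value of $\dim J_3$ is only an upper bound, and the subtraction argument for $\dim J_2/J_3$ would then only give a lower bound for that quotient.
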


\begin{proof} $\dim(\overline{\mathbb F}_p[PSL_2(\mathbb F_p)]/J_2)$ is the dimension of the image of $\overline{\mathbb F}_p[PSL_2(\mathbb F_p)]$ inside $\overline{\mathbb F}_p[GL_2(\mathbb F_p)]/I_1$, which is the product of the matrix algebras of all irreducible representations of $GL_2(\mathbb F_p)$. These representations have the form $\det^j \otimes \Sym^i V$ for some $i$ and $j$. For $i$ odd, the element $-1 \in GL_2(\mathbb F_p)$ acts by $-1$ on this representation. Since all elements in the image of $\mathbb F_p[PSL_2(\mathbb F_p)]$ are equal to their own composition with $-1$, this implies the image of $\mathbb F_p[PSL_2(\mathbb F_p)]$ vanishes on these representations.

Furthermore, the homomorphism from  $\overline{\mathbb F}_p[PSL_2(\mathbb F_p)]$ to the matrix algebra of $\det^j \otimes \Sym^i (V)$ is independent of $j$. So we reduce to a homomorphism from $\mathbb F_p[PSL_2(\mathbb F_p)]$ to the product of the matrix algebras $M_{i+1}$ over all even numbers $i$ from $0$ to $p-1$.  Since the representations $\Sym^i (V)$ for $i$ even remain irreducible and pairwise nonisomorphic as representations of $PSL_2(\mathbb F_p)$, this homomorphism is surjective, and the dimension of the image is $\sum_{k=0}^{(p-1)/2} (2k+1)^2$, so $\dim (\mathbb F_p[PSL_2(\mathbb F_p)]/J_2) = \sum_{k=0}^{(p-1)/2} (2k+1)^2$.

For $\mathbb F_p[PSL_2(\mathbb F_p)]/J_1$, everything is the same except that we do not count irreducible representations with $i=p-1$, so instead the largest possible value of $i$ is $p-3$, and thus we have \[ \dim (\mathbb F_p[PSL_2(\mathbb F_p)]/J_1) = \sum_{k=0}^{(p-3)/2} (2k+1)^2 = \frac{p(p-1)(p-2)}{6}.\]

Furthermore \[\dim J_1/J_2= \dim (\mathbb F_p[PSL_2(\mathbb F_p)]/J_2)  - \dim (\mathbb F_p[PSL_2(\mathbb F_p)]/J_1) = \sum_{k=0}^{(p-1)/2} (2k+1)^2 - \sum_{k=0}^{(p-3)/2} (2k+1)^2 = p^2.\]

All elements of $I_3$ are linear combinations of matrix coefficients of irreducible representations of $GL_2(\mathbb F_p)$ (other then $\det^i \otimes \Sym^{p-1}(V))$. Elements of $J_3$ are those matrix coefficients that are supported on $SL_2(\mathbb F_p)$ and are invariant under the action of multiplication by $-1$ on $SL_2(\mathbb F_p)$.

Certainly all elements of $J_3$ are linear combinations of matrix coefficients of irreducible representations of $SL_2(\mathbb F_p)$ other than $\Sym^{p-1} (V)$. Because all elements of $J_3$ are invariant under that involution, the contributions to the linear combination from any representation that does not descend to $PSL_2(\mathbb F_p)$ is equal to its own negation and so vanishes, so all elements of $J_3$ are matrix coefficients of irreducible representations of $PSL_2(\mathbb F_p)$ other than $\Sym^{p-1}(V)$. These irreducible representations are precisely $\Sym^{2k}(V)$ for $k$ from $0$ to $(p-3)/2$. 

Let us show that all matrix coefficients of these representations are actually elements of $J_3$. Let $\rho$ be the action of $GL_2(\mathbb F_p)$ on $\Sym^{2k}(V)$, let $v_2$ be a vector in $\Sym^{2k}(V)$, and let $v_1$ be a vector in  $\Sym^{2k}(V)^\vee$. Then for $g \in SL_2(\mathbb F_p)$, we have \[\langle v_1, \rho(g) v_2 \rangle = \frac{1}{p-1} \sum_{j=0}^{p-2}\langle v_1 , \det(g)^{j} \rho(g) \rangle\] and for all $g\not \in SL_2(\mathbb F_p)$, we have \[\frac{1}{p-1} \sum_{j=0}^{p-2}\langle v_1 , \det(g)^{j} \rho(g) \rangle =0\] So this sum, being a linear combination of matrix coefficient of irreducible representations of $GL_2(\mathbb F_p)$, is clearly an element of $I_3$, and also lies in the image of $\overline{\mathbb F}_p[PSL_2(\mathbb F_p)]$.  So $J_3$ indeed consists of the matrix coefficients of $\Sym^{2k}(V)$ for $k$ from $0$ to $(p-3)$ and hence has dimension the sum of the squares of the dimensions of these representations, as desired.

Finally as
 \[\dim \overline{\mathbb F}_p[PSL_2(\mathbb F_p)]= |PSL_2(\mathbb F_p)|= \frac{p(p+1)(p-1)}{2}\]
 
 and
 
 \[ \dim \overline{\mathbb F}_p[PSL_2(\mathbb F_p)]= \dim(\overline{\mathbb F}_p[PSL_2(\mathbb F_p)]/J_1) +  \dim J_1/J_2 + \dim J_2 / J_3 + \dim J_3 \]\[= \frac{p(p-1)(p-2)}{6} + p^2 + \dim J_2/ J_3 + \frac{p(p-1)(p-2)}{6}\]
 
 we have
 
 \[ \dim J_2/J_3 = \frac{p(p+1)(p-1)}{2}- 2 \frac{p(p-1)(p-2)}{6} - p^2 = \frac{ p (p^2-7)}{6} \]

\end{proof}

\begin{remark}The last step of the proof of Lemma \ref{hardproduct}, that the graph has no loops, is false in general for the analogous graph in $\mathbb F_p[PSL_2(\mathbb F_p)]$, which is why we did not work directly in that group algebra.\end{remark}

\begin{theo}\label{linear} Let $G$ be a group containing $PSL_2(\mathbb F_p)^n$ as a normal subgroup. The size of a multiplicative matching in $G$ is at most \[ |G| \inf_{\lambda \in (0,1)} \left( \frac{1 + \lambda^3 +\lambda^6}{3\lambda^2} + \frac{2 \lambda^2 - 1 - \lambda^6}{(p+1)\lambda^2} +\frac{2-2\lambda}{p^2-1} \right)^n   .\]
 \end{theo}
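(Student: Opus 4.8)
The plan is to apply Theorem \ref{main}(2) to the nonabelian finite simple group $H = PSL_2(\mathbb F_p)$ (here $p>3$, so that $H$ is simple, hence indecomposable with trivial center, and $\Aut(H) = PGL_2(\mathbb F_p)$, realized by conjugation inside $GL_2(\mathbb F_p)$), over $k = \overline{\mathbb F}_p$, using a filtration assembled from the ideals $J_1 \supseteq J_2 \supseteq J_3$ of Lemma \ref{verification}. Concretely, I would take the filtration of Lemma \ref{alternateformulation} attached to $J_1,J_2,J_3$ with breakpoints $\alpha_0 = 0$, $\alpha_1 = \frac13$, $\alpha_2 = \frac12$, $\alpha_3 = 1$: that is, $I_0 = \overline{\mathbb F}_p[H]$, $I_\alpha = J_1$ for $\alpha \in (0,\frac13]$, $I_\alpha = J_2$ for $\alpha \in (\frac13,\frac12]$, and $I_\alpha = J_3$ for $\alpha \in (\frac12, 1]$. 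These breakpoints are forced by demanding that the bound of Lemma \ref{alternateformulation}, after a reparametrization, match the target expression.

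First I would check the hypotheses of Theorem \ref{main}(2). The nesting $\overline{\mathbb F}_p[H] \supseteq J_1 \supseteq J_2 \supseteq J_3$ (Lemma \ref{verification}(1)) together with the half-open intervals gives $I_0 = \overline{\mathbb F}_p[H]$ and $I_\alpha = \bigcap_{\beta<\alpha} I_\beta$ at once. For $\Aut(H)$-invariance: $I_1, I_2, I_3$ are defined entirely in terms of the set of irreducible representations of $GL_2(\mathbb F_p)$ and their spaces of matrix coefficients, both of which are preserved by every inner automorphism of $GL_2(\mathbb F_p)$; hence $J_1, J_2, J_3$ are stable under conjugation by $GL_2(\mathbb F_p)$, which realizes all of $\Aut(PSL_2(\mathbb F_p)) = PGL_2(\mathbb F_p)$.

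The one genuinely nontrivial hypothesis is $I_\alpha I_\beta I_\gamma = 0$ whenever $\alpha + \beta + \gamma > 1$. I would handle this by assigning weights $w(\overline{\mathbb F}_p[H]) = 0$, $w(J_1) = \frac13$, $w(J_2) = \frac12$, $w(J_3) = 1$, so that $w(I_\alpha) \geq \alpha$ for all $\alpha$; it then suffices to show that an ordered product of three of the spaces $\overline{\mathbb F}_p[H], J_1, J_2, J_3$ of total weight exceeding $1$ vanishes. The input needed beyond $J_1 J_3 = 0$ and $J_1 J_2 \subseteq J_3$ (Lemma \ref{verification}(2),(3)) is that $J_1, J_2, J_3$ are stable under the anti-automorphism $g \mapsto g^{-1}$ of $\overline{\mathbb F}_p[H]$ — which holds because $I_1, I_2, I_3$ are, the relevant sets of representations being closed under passing to the contragredient — giving $J_3 J_1 = 0$ and $J_2 J_1 \subseteq J_3$, and then, using the nesting, $J_i J_3 = 0 = J_3 J_i$ and $J_2 J_2 \subseteq J_3$ for $i \in \{1,2,3\}$. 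A short finite check over the ordered triples of weight-sum $>1$ then closes this: each such product factors through $J_1 J_3 = 0$, $J_3 J_1 = 0$, or $J_3 J_3 = 0$ (for instance $J_1 J_1 J_2 = J_1(J_1 J_2) \subseteq J_1 J_3 = 0$, $\;J_2 J_1 J_1 = (J_2 J_1)J_1 \subseteq J_3 J_1 = 0$, $\;J_2 J_2 J_2 \subseteq J_3 J_2 \subseteq J_3 J_1 = 0$), while the one triple with all three factors equal to $J_1$ has weight-sum exactly $1$ and need not be treated.

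Finally I would evaluate the bound. Theorem \ref{main}(2) gives that the cardinality of a multiplicative matching in $G$ is at most
\[ |G| \left( \inf_{\lambda \in (0,1)} \frac{1 + \int_0^1 \frac{\dim I_\alpha}{|H|}\lambda^\alpha \log\lambda\, d\alpha}{\lambda^{1/3}} \right)^n. \]
By Lemma \ref{alternateformulation} and the dimension formulas $\dim(\overline{\mathbb F}_p[H]/J_1) = \dim J_3 = \frac{p(p-1)(p-2)}{6}$, $\dim(J_1/J_2) = p^2$, $\dim(J_2/J_3) = \frac{p(p^2-7)}{6}$, and $|H| = \frac{p(p^2-1)}{2}$, the quantity being minimized equals
\[ \frac{1}{\lambda^{1/3}}\left( \frac{p-2}{3(p+1)} + \frac{2p}{p^2-1}\lambda^{1/3} + \frac{p^2-7}{3(p^2-1)}\lambda^{1/2} + \frac{p-2}{3(p+1)}\lambda \right). \]
Substituting $\lambda = \mu^6$, a bijection of $(0,1)$ that leaves the infimum unchanged, turns this into $\frac{p-2}{3(p+1)}\mu^{-2} + \frac{2p}{p^2-1} + \frac{p^2-7}{3(p^2-1)}\mu + \frac{p-2}{3(p+1)}\mu^4$, and comparing the coefficients of $\mu^{-2}, \mu^0, \mu^1, \mu^4$ shows this is identically $\frac{1 + \mu^3 + \mu^6}{3\mu^2} + \frac{2\mu^2 - 1 - \mu^6}{(p+1)\mu^2} + \frac{2-2\mu}{p^2-1}$, the claimed expression. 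I expect the main obstacle to be the product condition $I_\alpha I_\beta I_\gamma = 0$: Lemma \ref{verification} alone does not suffice, since ordered products such as $J_3 J_1$ occur and must be killed by invoking the $g \mapsto g^{-1}$-stability of the $J_i$; the rest is bookkeeping — choosing the breakpoints $\frac13, \frac12, 1$ so that the formula of Lemma \ref{alternateformulation} fits the target after the substitution $\lambda = \mu^6$, and plugging in the dimension counts.
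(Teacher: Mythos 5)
Your proposal is correct and follows essentially the same route as the paper: the same filtration ($I_0=\overline{\mathbb F}_p[H]$, $I_\alpha=J_1$ on $(0,\tfrac13]$, $J_2$ on $(\tfrac13,\tfrac12]$, $J_3$ on $(\tfrac12,1]$) fed into Theorem \ref{main}(2) via Lemma \ref{alternateformulation}, followed by the substitution $\lambda\mapsto\lambda^6$, and your coefficient bookkeeping ($\tfrac{p-2}{3(p+1)}$, $\tfrac{2p}{p^2-1}$, $\tfrac{p^2-7}{3(p^2-1)}$) matches the paper's simplification exactly. The one place you go beyond the paper is the verification of $I_\alpha I_\beta I_\gamma=0$ for ordered triples: the paper writes $I_\alpha I_\beta I_\gamma\subseteq J_1J_1J_2\subseteq J_1J_3=0$ without attending to which slot the deeper ideal occupies, whereas Lemma \ref{verification} as stated only supplies the one-sided products $J_1J_3=0$ and $J_1J_2\subseteq J_3$, so products such as $J_2J_1J_1$ and $J_3J_1$ are not literally covered. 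Your fix --- the $J_i$ are stable under the antipode $g\mapsto g^{-1}$ because the defining sets of irreducible representations are closed under duality, whence $J_3J_1=0$ and $J_2J_1\subseteq J_3$ --- is correct and is exactly what is needed to close that gap (alternatively, one can rerun the proofs of Lemmas \ref{easyproduct} and \ref{hardproduct} with the factors reversed; the graph-theoretic step in Lemma \ref{hardproduct} is already symmetric). In short: same proof, with one step carried out more carefully than in the paper.
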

 
 \begin{proof} Set $I_0= \overline{\mathbb F}_p[PSL_2(\mathbb F_p)]$, $I_\alpha = J_1$ for $0<\alpha\leq 1/3$,  $I_\alpha = J_2$ for $1/3 < \alpha \leq 1/2$, and $J_3$ for $\alpha>1/2$.  This is actually a filtration by Lemma \ref{verification}(1), and certainly we have $I_0=\overline{\mathbb F}_p[PSL_2(\mathbb F_p)]$ and $I_\alpha = \bigcap_{\beta<\alpha} I_\beta$.
 
 Moreover, if $\alpha+ \beta + \gamma >1$ for $\alpha,\beta,\gamma \in[0,1]$, then either $\alpha,\beta,\gamma$ are all positive and one of $\alpha,\beta,\gamma$ is greater than $1/3$, so $I_\alpha I_\beta I_\gamma \subseteq J_1 J_1 J_2 \subseteq J_1J_3 = 0$ by Lemma \ref{verification}(3) and Lemma \ref{verification}(2), or  one of  $\alpha,\beta$ or $\gamma$ is zero, one is positive, and one is greater than $1/2$, so $I_\alpha I_\beta I_\gamma \subseteq J_1 J_3=0$ by Lemma \ref{verification}(2), .
 
 These ideals are automorphism-invariant because they are pullbacks of ideals of $\overline{\mathbb F}_p[GL_2(\mathbb F_p)]$, invariant under the conjugation action of $PGL_2(\mathbb F_p)$, which is also the automorphism group of $PSL_2(\mathbb F_p)$. 
 
 Applying Theorem \ref{main} and Lemma \ref{alternateformulation} we immediately get an upper bound of \[ |G| \left( \inf_{0<\lambda<1 }  \frac{  \frac{p(p-1)(p-2)}{6} + p^2 \lambda^{1/3} +  \frac{ p (p^2-7)}{6} \lambda^{1/2} +\frac{p(p-1)(p-2)}{6} \lambda  }{   \frac{p(p+1)(p-1)}{2}  \lambda^{1/3}} \right)^n\] for the size of a multiplicative matching in $G$. Substituting $\lambda^6$ for $\lambda$, we may write the upper bound as \[ |G| \left( \inf_{0<\lambda<1 }  \frac{  \frac{p(p-1)(p-2)}{6} + p^2 \lambda^2 +  \frac{ p (p^2-7)}{6} \lambda^3 +\frac{p(p-1)(p-2)}{6} \lambda^6  }{  \frac{p(p+1)(p-1)}{2} \lambda^2  } \right)^n.\]
 
 Simplifying, we obtain  \[ |G| \left( \inf_{0<\lambda<1 }  \frac{  (p-1)(p-2)+ 6p\lambda^2 +    (p^2-7)\lambda^3 +(p-1)(p-2) \lambda^6  }{ 3(p+1)(p-1)\lambda^2  } \right)^n\]
 \[= |G|\inf_{0<\lambda<1 }   \left( \frac{ (p-2)+  6\lambda^2 + (p+1) \lambda^3 +(p-2) \lambda^6  }{3(p+1) \lambda^2   }  + \frac{2-2\lambda}{p^2-1}\right)^n\]\[ = |G| \inf_{\lambda \in (0,1)} \left( \frac{1 + \lambda^3 +\lambda^6}{3\lambda^2} + \frac{2 \lambda^2 - 1 - \lambda^6}{(p+1)\lambda^2} +\frac{2-2\lambda}{p^2-1} \right)^n   .\]

 \end{proof}

\begin{remark} \[\lim_{p \to \infty} \left( \inf_{\lambda \in (0,1)} \left( \frac{1 + \lambda^3 +\lambda^6}{3\lambda^2} + \frac{2 \lambda^2 - 1 - \lambda^6}{(p+1)\lambda^2} +\frac{2-2\lambda}{p^2-1} \right)\ \right) =\inf_{\lambda \in (0,1)} \left( \frac{1 + \lambda^3 +\lambda^6}{3\lambda^2} \right)\approx .919\]

This constant appears also in the Ellenberg-Gijswijt bound for sets in $\mathbb F_3^n$ free of $3$-term progressions, which is $\left(3 e^{-I(2/3)}\right)^n$ \cite[Corollary 5]{EG}. By performing the change of variables $\lambda=e^{3\theta}$ on \cite[Equation (2)]{EG}, one sees that $e^{-I(2/3)}=\inf_{\lambda \in (0,1)} \left( \frac{1 + \lambda^3 +\lambda^6}{3\lambda^2} \right)$. 

We do not know if this coincidence has any further significance. \end{remark}

\bibliographystyle{abstract}

\bibliography{references}

\end{document}